\newtheorem{prop}{Proposition}
\newtheorem{coro}{Corollary}
\newtheorem{lem}{Lemma}
\newtheorem{thm}{Theorem}
\begin{document}

\title{Thom's Gradient Conjecture for Parabolic Systems and the Yang-Mills and Ricci Flows}
\author{Lorenz Schabrun}
\date{May 3, 2021}
\maketitle \textbf{Abstract.} In \cite{ThomGradient}, the gradient conjecture of R. Thom was proven for gradient flows of analytic functions on $\mathbb{R}^n$. This result means that the secant at a limit point converges, so that the flow cannot spiral forever. Once the trajectory becomes sufficiently close to a critical point, the flow becomes a simple scaling. Their paper is also significant in the number of auxiliary results they prove about the convergence behaviour of gradient flows, on the way to proving their main result. Many gradient flows of interest occur on infinite-dimensional function spaces. And of considerable research interest today are geometric flows with a gauge or diffeomorphism symmetry. We show that the corresponding gradient conjecture holds also for parabolic flows on Hilbert spaces, including flows with a gauge symmetry such as the extensively studied Yang-Mills Flow. The same result also holds for the Ricci flow near any critical point where the assumptions are satisfied, in particular a Fredholm Hessian and a Lojasiewicz inequality with respect to an appropriately chosen functional. This version contains some small improvements on the original 2021 paper.
\bigbreak

\section*{Introduction}

Before discussing Thom's conjecture, we begin with a diversion on the Lojasiewicz inequality which illustrates our approach in this paper. In \cite{Simon}, L. Simon showed that the Lojasiewicz inequality holds on infinite dimensional Hilbert spaces when the Hessian is elliptic.  Simon's result has been cited extensively to prove the convergence of geometric flows. His paper showed that results from semianalytic geometry could be deployed to study functions and gradient flows on infinite dimensional spaces. However, the requirement of an elliptic Hessian is a very special case. It is natural to look for much more general conditions under which functions on infinite dimensional spaces will satisfy a Lojasiewicz inequality, including functions for which the Hessian is highly degenerate or zero.

Considering the proof of the Lojasiewicz inequality in \cite{Bierstone} (Theorem 6.4), the proof begins with two subanalytic functions $f,g: \mathbb{R}^n \to \mathbb{R}$. It then considers the map $\mathbb{R}^n \to \mathbb{R}^2$ given by $x \to (f(x),g(x))$, and projects the graph of this map onto $\mathbb{R}^2$. The proof makes use of the fact that the resulting set in $\mathbb{R}^2$ is by definition subanalytic. For a function $\mathcal{E}:H \to \mathbb{R}$, where $H$ is an infinite dimensional Hilbert space, we can consider the corresponding map $H \to \mathbb{R}^2$ and ask under what circumstances the projection onto $\mathbb{R}^2$ is still subanalytic. 

One approach is to ask whether we can find a function $f: \mathbb{R}^n \to \mathbb{R}$, for some $n$, and a map $\phi: H \to \mathbb{R}^n$, such that $f \circ \phi = \mathcal{E}$ and $\| \mathcal{E}' \| \geq \|\nabla f \circ \phi\|$. Then, the Lojasiewicz inequality follows immediately from the same result on the finite dimensional space. This simple idea is a way to finesse the difficulties of considering concepts of semianalytic geometry on an infinite dimensional space. In order to prove Thom's conjecture on infinite dimensional spaces, we use this same strategy of relating functions on infinite dimensional spaces to functions on finite dimensional spaces in order to deploy the results of semianalytic geometry. 

The Lojasiewicz inequality is an effective tool for proving the convergence of gradient flows of analytic functions on $\mathbb{R}^n$. In \cite{ThomGradient}, Kurdyka, Mostowski and Parusi\'{n}ski prove Thom's gradient conjecture, which provides additional information about the convergence behaviour of the flow. They show that the projection of the trajectory onto the unit sphere has finite length, in particular that the secant 
\[
\frac{x(t)-x_0}{|x(t)-x_0|} 
\]
at a limit point $x_0$ converges. The authors take a step towards developing applications of the result by showing that it holds also for gradient flows of functions $f: M \to \mathbb{R}$ for finite dimensional Riemannian manifolds $M$.

Of considerable modern research interest are geometric flows. These are typically parabolic or ``almost" parabolic flows in infinite dimensional spaces. The Lojasiewicz inequality has been extended to parabolic flows on Hilbert spaces by Simon \cite{Simon}, by utilizing the finite dimensionality of the kernel of the Hessian, and subsequently to geometric gauge invariant flows such as the Ricci flow and Yang-Mills flow by other authors. Our approach here in extending the results of \cite{ThomGradient} to these domains is similar. In fact, other results about gradient flows from semianalytic geometry might also extend to geometric flows in the same way. For example, in \cite{ThomGradient} the authors mention that the existence of the limit of $\frac{x'(t)}{|x'(t)|}$ for an analytic gradient flow is still an open question on $\mathbb{R}^n$. However, if proven it could likely be extended to geometric flows using a very similar approach to that used here.

Under the condition that the second derivative is a Fredholm operator, that is it has finite dimensional kernel and cokernel, and also closed range, we have the following result for gradient flows on Hilbert spaces.

\begin{thm} \label{Thm1}
Let $V$ be a Hilbert space and let $U \subseteq V$ be an open neighborhood of $0$. Let $\mathcal{E}$ be an analytic function (in particular $\mathcal{E} \in C^2(U)$), and assume the origin is a critical point, i.e. $\mathcal{E}'(0) = 0$. Suppose that $\mathcal{E}''(0)$ is a Fredholm operator. Then for a trajectory $u(t) \to 0$ of the gradient flow, the projection
\[
\frac{u(t)}{\| u(t) \|} 
\]
onto the unit sphere has finite length, in particular, it converges.
\end{thm}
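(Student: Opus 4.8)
The plan is to perform a Lyapunov--Schmidt reduction that replaces $\mathcal{E}$ near the origin by an analytic function $f$ on the finite-dimensional kernel of $\mathcal{E}''(0)$, and then to transfer the finite-length conclusion of \cite{ThomGradient} from the reduced flow on $\mathbb{R}^n$ to the original trajectory. Write the flow as $\dot x = -\mathcal{E}'(x)$ and let $x(t)$ be the trajectory converging to the origin. Since $\mathcal{E}''(0)$ is self-adjoint and Fredholm with closed range, there is an orthogonal splitting $V = K \oplus K^{\perp}$ in which $K = \ker \mathcal{E}''(0)$ is finite-dimensional, say $\dim K = n$, and $\mathcal{E}''(0)$ restricts to an isomorphism of $K^{\perp}$. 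Let $P, Q$ be the orthogonal projections onto $K$ and $K^{\perp}$, and write $x = \xi + \eta$ with $\xi = Px$ and $\eta = Qx$.

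First I would solve the transverse equation $Q\mathcal{E}'(\xi + \eta) = 0$ for $\eta = h(\xi)$. Its linearization in $\eta$ at the origin is $Q\mathcal{E}''(0)Q$, which is invertible on $K^{\perp}$, so the analytic implicit function theorem produces an analytic map $h : K \to K^{\perp}$ with $h(0) = 0$; because $\mathcal{E}''(0)$ annihilates $K$ one also gets $Dh(0) = 0$, hence $h(\xi) = O(\|\xi\|^{2})$. The reduced function $f(\xi) := \mathcal{E}(\xi + h(\xi))$ is then analytic on $K \cong \mathbb{R}^{n}$ and satisfies the standard identity $\nabla f(\xi) = P\mathcal{E}'(\xi + h(\xi))$. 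This is exactly the factorization onto a finite-dimensional space anticipated in the introduction, and it is what makes the semianalytic results of \cite{ThomGradient} available.

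Next I would control the transverse motion. Because $\mathcal{E}''(0)$ is invertible on $K^{\perp}$, a coercivity estimate of the form $\|Q\mathcal{E}'(x)\| \geq c\,\|\eta - h(\xi)\|$ holds near the origin for some $c > 0$; together with the dissipation identity $\int_{0}^{\infty} \|\mathcal{E}'(x(t))\|^{2}\,dt = \mathcal{E}(x(0)) - \mathcal{E}(0) < \infty$ this keeps the transverse deviation $\eta - h(\xi)$ small along the flow and, with $h(\xi) = O(\|\xi\|^{2})$, yields $\|\eta(t)\| = o(\|\xi(t)\|)$. The length of the spherical projection is $\int_{0}^{\infty} \|x\|^{-1}\|P_{x^{\perp}}\dot x\|\,dt$, where $P_{x^{\perp}}$ projects off $x$, and the above decay means that this differs from the corresponding length of the kernel curve $\xi(t)/\|\xi(t)\|$ only by correction terms built from $\eta$ and $\dot\eta$, which the transverse estimates should render integrable.

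Finally, the kernel motion satisfies $\dot\xi = -\nabla f(\xi) + R(t)$, where $R = P\mathcal{E}'(\xi + h(\xi)) - P\mathcal{E}'(\xi + \eta)$ is controlled by $\|\eta - h(\xi)\|$ through the estimate above. The theorem of \cite{ThomGradient} gives finite spherical length for the exact gradient flow of the analytic function $f$ on $\mathbb{R}^{n}$, so what remains is to show that this conclusion survives the passage from the exact flow to the approximate flow $\dot\xi = -\nabla f(\xi) + R$. This is where I expect the principal difficulty to lie, since \cite{ThomGradient} is proved only for genuine gradient trajectories. I would resolve it by re-examining their argument, whose control of length rests on the Lojasiewicz inequality with sharp exponent together with a comparison of the radial and tangential parts of $\nabla f$ along the curve, and by checking that each ingredient is stable under the perturbation $R$; the Lojasiewicz inequality for $\mathcal{E}$ required here is itself inherited from $f$ via the factorization above, which simultaneously recovers the result of \cite{Simon}.
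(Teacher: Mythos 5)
Your Lyapunov--Schmidt setup (the manifold $\{\,Q\mathcal{E}'=0\,\}$, the analytic graph $h$, the coercivity $\|Q\mathcal{E}'(x)\|\geq c\|\eta-h(\xi)\|$) coincides with the paper's construction of the finite-dimensional manifold $S$ and the estimate $\|\mathcal{E}'\|\geq c\|u-Qu\|$. But the step you yourself flag as the principal difficulty --- transferring the finite-length conclusion of Kurdyka--Mostowski--Parusi\'nski from the exact gradient flow of $f=\mathcal{E}\circ(\mathrm{id}+h)$ on the kernel to the perturbed equation $\dot\xi=-\nabla f(\xi)+R(t)$ --- is a genuine gap, not a routine verification. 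The gradient conjecture is a statement that fails for general (even small, even integrable-in-norm) non-gradient perturbations of a gradient field: spiraling is exactly the phenomenon one must exclude, and it can be produced by an angular perturbation whose size is comparable to $\|\eta-h(\xi)\|$. Your proposed controls ($\int\|\mathcal{E}'\|^2\,dt<\infty$ and $\|\eta\|=o(\|\xi\|)$) bound the magnitude of $R$ but say nothing about its direction relative to the secant, which is what drives the spherical length. ``Checking that each ingredient of \cite{ThomGradient} is stable under $R$'' is precisely the content that would need to be supplied, and it is not clear it can be along this route.

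The paper circumvents this by never comparing trajectories at all. It transfers only the \emph{static} semianalytic facts --- finiteness of the characteristic exponents $L$ (Proposition \ref{prop4.2}), the Bochnak--Lojasiewicz inequality (Lemma \ref{Bochnak}), the Lojasiewicz-type inequality for $E-a$ on the set $Z$ and the finiteness of asymptotic critical values (Proposition \ref{prop5.3}) --- through a map $\phi:U\to\mathbb{R}^{k+2}$ into a model function $f(x,y^1,y^2)=\mathcal{E}|_S(x)+a[(y^1)^2-(y^2)^2]$. The two extra coordinates are essential: they record the transverse deviation $u-Qu$ and its quadratic energy, so that $\phi$ preserves the function value, the radial derivative and the angular derivative up to controlled errors (equations (\ref{f1}), (\ref{f2}), (\ref{ftheta})) and maps $W^\varepsilon$ into $\overline W^{\bar\varepsilon}$. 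Your reduction to the kernel alone discards the transverse part of the derivative, which by $\|\mathcal{E}'\|\geq c\|u-Qu\|$ is the \emph{dominant} part of $\mathcal{E}'$ off the manifold $S$, so the loss is not a lower-order correction. With the transferred inequalities in hand, the paper then reruns the trajectory arguments of sections 6--7 of \cite{ThomGradient} directly on the infinite-dimensional flow (Propositions \ref{prop6.1}--\ref{prop6.4}, Corollary \ref{coro6.5}), where the genuine gradient structure of $\dot u=-\mathcal{E}'/\|\mathcal{E}'\|$ is available and no perturbation argument is needed. To repair your proposal you would either have to prove a perturbed-gradient version of the gradient conjecture (a substantial new result) or restructure along the paper's lines: keep the trajectory in $V$ and push only the inequalities, not the dynamics, down to finite dimensions.
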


For the gauge invariant case we have

\begin{thm} \label{Thm2}
Let $M$ be an affine Hilbert manifold with the action of a Hilbert Lie group $G$, and let $\mathcal{E} \in C^2(M)$ be an analytic, $G$-invariant functional and assume the origin is a critical point, i.e. $\mathcal{E}'(0) = 0$. Denote by $\rho_x:\text{Lie}(G) \to T_xM$ the infinitesimal action of $G$. Suppose that the following operator is elliptic
\begin{equation*} \label{Hpp}
\mathcal{E}''(x) + \rho_x \rho_x^*: T_xM \to T_xM.
\end{equation*}
Then for a trajectory $u(t) \to 0$ (up to gauge) of the gradient flow, there exists a path of gauge transformations $g(t) \in G$ such that the projection
\[
\frac{g(t)u(t)}{\| g(t)u(t) \|} 
\]
onto the unit sphere has finite length, in particular, it converges.
\end{thm}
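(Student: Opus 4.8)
The plan is to reduce Theorem \ref{Thm2} to the Hilbert space case of Theorem \ref{Thm1} by gauge-fixing the trajectory to a local Coulomb slice, on which the flow becomes an ordinary gradient flow whose Hessian is Fredholm. Two consequences of $G$-invariance drive the reduction. Differentiating $\mathcal{E}(g\cdot x)=\mathcal{E}(x)$ once gives $\rho_x^*\,\nabla\mathcal{E}(x)=0$, so the gradient is everywhere orthogonal to the gauge orbits; differentiating a second time at the critical point, where $\mathcal{E}'(0)=0$, gives $\langle \mathcal{E}''(0)v,\rho_0\xi\rangle=0$ for all $v,\xi$, so $\operatorname{Im}\mathcal{E}''(0)\subseteq\ker\rho_0^*$ and, since $\mathcal{E}''(0)$ is self-adjoint, also $\operatorname{Im}\rho_0\subseteq\ker\mathcal{E}''(0)$. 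Assuming the action is locally free at $0$ (so that $\rho_0$ is injective with closed range, as on the irreducible locus), the ellipticity of $\mathcal{E}''(x)+\rho_x\rho_x^*$ makes $\rho_0^*\rho_0$ invertible and yields the orthogonal splitting $T_0M=\operatorname{Im}\rho_0\oplus V_S$, where $V_S:=\ker\rho_0^*$ is the Coulomb slice direction.

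Next I would construct the gauge transformations. Let $x(t)$ denote the trajectory (written $\mathcal{E}(t)$ in the statement), with $x(t)\to 0$ up to gauge, and let $S=\{y:\rho_0^*(y)=0\}$ be the affine slice through $0$. I would determine $g(t)\in G$ keeping $y(t):=g(t)\cdot x(t)$ in $S$: writing $\xi=\dot g\,g^{-1}$ and using equivariance of the gradient, one has $\dot y=-\nabla\mathcal{E}(y)+\rho_y\xi$, so the slice condition $\rho_0^*\dot y=0$ becomes the linear equation $\rho_0^*\rho_y\,\xi=\rho_0^*\nabla\mathcal{E}(y)$. Since $\rho_0^*\rho_y$ is invertible for $y$ near $0$ by continuity from $\rho_0^*\rho_0$, this determines $\xi$ and hence $g(t)$, and the gauge-fixed trajectory remains in $S$.

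I would then identify $y(t)$ with the gradient flow of the restricted analytic function $\mathcal{E}|_S$ on the Hilbert space $S$. Its Hessian at $0$ is $\mathcal{E}''(0)|_{V_S}$; by the two inclusions above, the operator $\mathcal{E}''(0)+\rho_0\rho_0^*$ is block-diagonal for the splitting $\operatorname{Im}\rho_0\oplus V_S$, with the invertible block $\rho_0\rho_0^*$ on $\operatorname{Im}\rho_0$ and $\mathcal{E}''(0)|_{V_S}$ on $V_S$, so ellipticity of the sum forces $\mathcal{E}''(0)|_{V_S}$ to be Fredholm. Theorem \ref{Thm1} then applies to $\mathcal{E}|_S$ and the trajectory $y(t)$, giving finite length and convergence of $y(t)/\|y(t)\|=g(t)x(t)/\|g(t)x(t)\|$, which is the assertion.

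The main obstacle is the identification in the preceding paragraph. The affine Coulomb slice $S$ is transverse but not orthogonal to the nearby orbits, so the driving term $-\nabla\mathcal{E}(y)$ (horizontal since $\nabla\mathcal{E}\perp\operatorname{Im}\rho_y$) together with the correction $\rho_y\xi$ yields $\dot y=-P_{V_S}\nabla\mathcal{E}(y)+\delta$ with $\delta\in V_S$ and $\|\delta\|=O(\|y\|\,\|\nabla\mathcal{E}(y)\|)$, so $y(t)$ is the gradient flow of $\mathcal{E}|_S$ only up to this higher-order tangential term. I expect the crux to be showing that $\delta$ does not affect the finite-length conclusion, either by absorbing it into the secant estimates underlying Theorem \ref{Thm1}, or by instead using the geometric slice $\exp(V_S)$, on which the slice and horizontal distributions coincide so that $y(t)$ is a genuine gradient flow, and invoking the Riemannian-manifold version of Thom's conjecture from \cite{ThomGradient}. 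Establishing solvability and regularity of the equation for $g(t)$ and carrying out one of these two routes is where the real work lies.
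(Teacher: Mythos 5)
Your approach is essentially the paper's: gauge-fix the trajectory into the Coulomb slice $\{y:\rho_0^*(y-x)=0\}$ through the critical point, observe that the restricted functional has Fredholm (indeed elliptic) Hessian there because $\rho_x\rho_x^*$ vanishes on $\ker\rho_x^*$ and $\mathcal{E}$ is constant in the orbit directions, and invoke Theorem \ref{Thm1}. The paper produces the slice and the gauge transformations not by solving your ODE $\rho_0^*\rho_y\xi=\rho_0^*\nabla\mathcal{E}(y)$ but by quoting Wilkin's Propositions 3.20 and 3.25 (an implicit-function-theorem statement that every nearby point is gauge-equivalent, via $g=e^{-\mathfrak{g}}$ with $\mathfrak{g}\in(\ker\rho_x)^\perp$, to a point of the slice $N$, and that $(\mathfrak{g},X)\mapsto e^{\mathfrak{g}}\cdot(x+X)$ is a local diffeomorphism); working with $(\ker\rho_x)^\perp$ also lets the paper avoid your extra standing assumption that the action is locally free at $0$. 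These are minor differences of packaging.

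The substantive point is the one you flag at the end: the gauge-fixed path $y(t)$ is the gradient flow of $\mathcal{E}|_N$ only up to the tangential correction $\delta=P_{V_S}(\rho_y\xi)=O(\|y\|\,\|\nabla\mathcal{E}(y)\|)$, because the affine slice is orthogonal to the orbit through $0$ but not to nearby orbits. You are right that this is the crux, and you should know that the paper does not resolve it: its entire proof of Theorem \ref{Thm2} is the assertion that $g(t)\mathcal{E}(t)\in N$ ``is a trajectory of the gradient flow of $\mathcal{E}|_N$,'' with no estimate on, or even mention of, the discrepancy. So your proposal is not weaker than the paper's argument; it is more honest about where the gap is. Of your two suggested repairs, absorbing $\delta$ into the secant estimates is the viable one (the relative size $O(\|y\|)$ of the perturbation is small enough that the finite-length conclusion should survive, though Propositions \ref{prop6.1}--\ref{prop6.4} would have to be rechecked for the perturbed flow); the alternative of passing to a slice on which the slice and horizontal distributions coincide is generally unavailable, since the horizontal distribution need not be integrable. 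Completing the first repair would actually fill a hole in the paper rather than merely reproduce it.
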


We next state the corresponding result for the gauge-symmetric Yang-Mills flow.

Let $M$ be a compact, connected, oriented four-dimensional Riemannian manifold and $E$ a vector bundle over $M$ with structure group $G$ compatible with the natural inner product on
$E$, where $G$ is a compact  Lie group. Let $\|\cdot\|$ be the $H^1$ norm on the space of $H^1$ connections.
For each connection $A$, the Yang-Mills functional is defined by
\begin{equation*}\mbox{YM}(A) = \int_M |F_A|^2 \,dV, \end{equation*}
where  $F_A$ is the curvature of $A$. The Yang-Mills flow is defined by
\begin{equation}\label{YMFlow}
\frac{\partial A}{\partial t} = -D_A^*F_A.  
\end{equation}
Consider a solution $A(t)$ of (\ref{YMFlow}) with initial point $A_0$. Suppose for a subsequence $t_n \in \mathbb{R}$, $A(t_n) \to A_\infty$ in $H^1$ up to gauge. Since the Yang-Mills functional satisfies a Lojasiewicz inequality (see Wilkin \cite{Wilkin} and Rade \cite{Rade}), the solution itself in fact converges in $H^1$ up to gauge to $A_\infty$.

\begin{thm} \label{Thm3}
Let $A(t) \to A_\infty$ (up to gauge) be a solution of the Yang-Mills flow. There exists a path of gauge transformations $g(t) \in G$ such that the projection
\[
\frac{g(t)A(t) - A_\infty}{\| g(t)A(t) - A_\infty \|} \in \Gamma(T^*M \otimes \text{End}(E))
\]
onto the unit sphere has finite length under $\| \cdot \|_{H^1}$, in particular, it converges in $H^1$. 
\end{thm}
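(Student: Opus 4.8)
The plan is to deduce Theorem \ref{Thm3} from Theorem \ref{Thm2} by placing the Yang-Mills flow inside the gauge-invariant abstract framework, and then to upgrade the resulting $H^1$ convergence to smooth convergence away from the finite set of energy-concentration points. First I would fix the ambient geometry: take the affine Hilbert manifold of Theorem \ref{Thm2} to be the space of $H^1$ connections, modelled on the Hilbert space $\Omega^1(M,\text{ad}(E))$ of $H^1$ one-forms valued in the adjoint bundle and recentred at the Yang-Mills connection $A_\infty$, which is then a critical point. The gauge group $\mathcal{G}$ of $H^2$ gauge transformations is a Hilbert Lie group acting analytically on this space and leaving $\text{YM}$ invariant, and it plays the role of the group $G$ of Theorem \ref{Thm2}. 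Its infinitesimal action is the covariant derivative, $\rho_A\xi = D_A\xi$ for $\xi \in \text{Lie}(\mathcal{G}) = \Omega^0(M,\text{ad}(E))$, so that $\rho_A^* = D_A^*$; the flow \eqref{YMFlow} is the negative $L^2$-gradient flow of $\text{YM}$.

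Next I would verify the two hypotheses of Theorem \ref{Thm2}. Analyticity is immediate: $F_A = dA + \tfrac{1}{2}[A\wedge A]$ is polynomial in $(A,dA)$, and the Sobolev embedding $H^1 \hookrightarrow L^4$ in dimension four makes $\text{YM}(A)=\int_M|F_A|^2\,dV$ a continuous degree-four polynomial, hence an analytic functional. For ellipticity, the second variation at a critical point has the form $\text{YM}''(A) = D_A^*D_A + R_A$ with $R_A$ a bounded zeroth-order algebraic operator built from $F_A$, so that
\[
\text{YM}''(A) + \rho_A\rho_A^* = D_A^*D_A + D_A D_A^* + R_A,
\]
the coupled Hodge Laplacian on one-forms perturbed by a bounded zeroth-order term. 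Its principal symbol is that of the Laplacian, whence the operator is elliptic; this is exactly the classical observation underlying Coulomb gauge fixing. With both hypotheses confirmed, Theorem \ref{Thm2} yields a path $g(t) \in \mathcal{G}$ for which the normalized secant $\bigl(g(t)A(t)-A_\infty\bigr)/\|g(t)A(t)-A_\infty\|$ has finite length and converges.

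A point I would treat with care is the discrepancy between the $L^2$ metric defining the flow and the $H^1$ norm in which length is to be measured. Since after gauge fixing the flow is governed by the elliptic operator above, parabolic smoothing supplies uniform higher-order estimates along the trajectory once it enters a neighbourhood of $A_\infty$; on the finite-dimensional kernel of the Hessian the $L^2$ and $H^1$ norms are equivalent, while on its complement the flow decays. Combining these, I would transfer the finite-length conclusion from $L^2$ to the $H^1$ norm.

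Finally I would promote $H^1$ convergence to smooth convergence away from finitely many points. In dimension four the Yang-Mills energy is conformally invariant and energy may concentrate, but Uhlenbeck's $\epsilon$-regularity theorem provides an energy threshold below which all derivatives of the curvature are uniformly controlled; hence energy can concentrate at only finitely many points, and on any compact subset of their complement the flow, and with it the normalized secant, converges smoothly. I expect this four-dimensional bubbling and regularity analysis, together with the $L^2$-to-$H^1$ reconciliation, to be the main obstacles, since both lie outside the abstract machinery of Theorems \ref{Thm1} and \ref{Thm2} and demand the specific analytic theory of the Yang-Mills flow.
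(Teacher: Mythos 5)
Your proposal follows the same route as the paper: Theorem \ref{Thm3} is deduced by placing the Yang--Mills functional in the gauge-invariant framework of Theorem \ref{Thm2}, whose hypotheses (analyticity of $\mathrm{YM}$ and ellipticity of $\mathrm{YM}''(A)+\rho_A\rho_A^*$) are exactly the standard facts underlying the Lojasiewicz inequality for $\mathrm{YM}$ cited from Wilkin and R\aa de. You are in fact more careful than the paper, which disposes of the theorem in a single sentence (``convergence of the secant in $H^1$ follows immediately from Theorem \ref{Thm2}'') and explicitly declines to pursue the smooth convergence of the secant away from the bubbling points; your attention to the $L^2$-versus-$H^1$ metric discrepancy and to the $\epsilon$-regularity argument addresses real gaps that the paper leaves open.
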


Write $\tilde A(t):=g(t)A(t) - A_\infty\in N$ where $\tilde A(t)\to0$ in $H^1$ as $t\to\infty$. By standard $\varepsilon$–regularity/bubbling theory, there exists a finite set $\Sigma\subset M$ such that for every compact $K\subset\subset M\setminus\Sigma$ and every $m\ge0$,
\[
\sup_{t\gg1}\|\tilde A(t)\|_{C^m(K)}<\infty,
\qquad
\tilde A(t)\to0\ \text{in }C^\infty(K).
\]
From Theorem 3 we know that the secant
\[
v(t):=\frac{\tilde A(t)}{\|\tilde A(t)\|_{H^1}}
\]
converges in $H^1$ to some $v_\infty$ with $\| v_\infty \|_{H^1} = 1$. It's interesting to ask whether the secant also converges smoothly away from the set $\Sigma$. However, showing that $v_\infty$ is the solution to an elliptic equation, which would normally be used to prove such a result, does not follow immediately from the ellipticity of the original flow after gauge-fixing.

Concerning the Lojasiewicz inequality for the Ricci flow, some special cases have been studied. Haslhofer \cite{Haslhofer} studied the Lojasiewicz inequality at Ricci flat metrics using Perelman's $\lambda$ functional, while Sun and Wang \cite{SunWang} studied  the Lojasiewicz inequality at K\"ahler-Einstein metrics using Perelman's $\mu$ functional. Under circumstances where this inequality does hold and the second derivative is Fredholm at the critical point, we also have the corresponding Thom's gradient conjecture for the Ricci flow.

\begin{thm}\label{ThmRicci}
Let $g(t)$ be a (normalized or unnormalized) Ricci flow on a compact manifold $M$
such that $g(t_j)\to g_\infty$ in $H^k$ up to diffeomorphism for some sequence $t_j\to\infty$,
where $g_\infty$ is a critical point of a functional $\mathcal{E}$, with $\mathcal{E}$ either Perelman's $\lambda$ or $\mu$ functional. Assume that $\mathcal{E}$ satisfies a Lojasiewicz inequality in a neighbourhood of $g_\infty$ with respect to some norm $|| \cdot ||$ on symmetric 2-tensors, and that after gauge transformations the Hessian at the limit
\[
\lambda''(g_\infty)+\rho_{g_\infty}\rho_{g_\infty}^*
\]
is Fredholm. Then there exists a path of diffeomorphisms $\phi(t)\in\mathrm{Diff}(M)$
such that
\[
\frac{\phi(t)^*g(t)-g_\infty}{\|\phi(t)^*g(t)-g_\infty\|}\in \Gamma(S^2T^*M)
\]
has finite length in the unit sphere (hence converges) as $t\to\infty$.
\end{thm}

\begin{coro}\label{CorKR}
In particular, using the results of Sun–Wang \cite{SunWang} for the K\"ahler–Ricci flow, if $g_\infty$ is Kähler–Einstein and the flow converges to $g_\infty$
up to diffeomorphisms, then the secant converges in $L^2$.
\end{coro}
\medskip

\noindent\textbf{Remark.} For both the Ricci and Yang–Mills flows, singularity formation lies outside the scope of theorems 3 and 4. We assume convergence to a critical point. \\

\section*{Background}

The Lojasiewicz inequality for parabolic flows on Hilbert spaces was originally proved by Simon \cite{Simon}.  See also  \cite{Chill}, \cite{ChillPaper} and \cite{Jendoubi}.

Let $V$ be a Hilbert space and let $U \subseteq V$ be an open subset. Let $\mathcal{E} \in C^2(U)$ be an analytic function and assume that $0 \in U$ is a critical point, i.e. $\mathcal{E}'(0) = 0$. We suppose that $\mathcal{E}''(0)$ is a Fredholm operator, that is, it has finite-dimensional kernel and cokernel, and closed range. We also assume for convenience that $\mathcal{E}(0) = 0$. Then in a neighborhood of $0$, $\mathcal{E}$ satisfies the Lojasiewicz inequality
\begin{equation} \label{Lojasiewicz}
\|\mathcal{E}'\| \geq c|\mathcal{E}|^\rho,
\end{equation}
for some exponent $\rho \in [\frac{1}{2},1)$ and constant $c>0$. We consider the gradient flow, typically parameterised by arclength $s$:
\begin{equation}
\frac{du}{ds} = -\frac{\mathcal{E}'}{\|\mathcal{E}'\|}.
\end{equation}
The Lojasiewicz inequality implies
\begin{equation} \label{Lconseq}
\frac{d\mathcal{E}}{ds} = \langle \mathcal{E}', \frac{du}{ds} \rangle = -\|\mathcal{E}'\| \leq -c|\mathcal{E}|^\rho.
\end{equation}
It follows that
\[
\frac{d|\mathcal{E}|^{1-\rho}}{ds} \leq -c(1-\rho).
\]
Integrating, if the initial point $u(s)$ is sufficiently close to $0$, the length of the trajectory between $u(s)$ and $u(s_0)=0$ is bounded by
\begin{equation} \label{TrajFinite}
|s-s_0| \leq c |\mathcal{E}(u(s))|^{1-\rho},
\end{equation}
which of course implies convergence.

Returning to the $t$ parameterization and using a dot for time derivative,
\[
\dot{\mathcal{E}}(t) = -\| \mathcal{E}' \|^2 = - \| \dot{u}(t) \|^2,
\]
Integrating gives the bounds
\begin{equation}\label{E-bounds}
\mathcal{E}(t) \le 
\begin{cases}
\mathcal{E}(0)\, e^{-2ct}, & \rho = \tfrac12,\\[6pt]
\dfrac{c}{(1+t)^{1/(2\rho-1)}}, & \rho \in (\tfrac12,1),
\end{cases}
\end{equation}
\begin{equation}\label{uprime-bounds}
\left\| \dot{u}(t) \right\| = \sqrt{-\dot{\mathcal{E}}(t) } \le
\begin{cases}
C\, e^{-ct}, & \rho = \tfrac12,\\[6pt]
\dfrac{c}{(1+t)^{\rho/(2\rho-1)}}, & \rho \in (\tfrac12,1),
\end{cases}
\end{equation}
where we allow constants to change from term to term.

Let $P$ be the orthogonal projection onto $\ker \mathcal{E}''(0)$ and $P'$ the adjoint projection. As in \cite{Chill}, we have
\begin{align} 
V &= V_0 \oplus V_1 \label{directsum}\\
&=\mathrm{rg}P \oplus \ker P \nonumber \\
&= \ker \mathcal{E}''(0) \oplus \ker P \nonumber.
\end{align}
Note that from the same document we have that $\mathcal{E}''(0)$ is continuously invertible on $V_1$. We define the finite dimensional analytic manifold
\[
S = \{u \in U| (I-P')\mathcal{E}'(u)=0 \},
\]
and denote by $Q$ the nonlinear projection onto $S$ which leaves the $V_0$ coordinate unchanged. We let $k = \dim S$. As in \cite{Chill}, we have the following Taylor series.
\begin{equation} \label{T1}
\mathcal{E}(u) = \mathcal{E}(Qu) + \frac{1}{2}\langle \mathcal{E}''(Qu)(u-Qu),u-Qu \rangle + o(\|u-Qu\|^2).
\end{equation}
\begin{equation} \label{T2}
\mathcal{E}'(u) = \mathcal{E}'(Qu) + \mathcal{E}''(Qu)(u-Qu) + o(\|u-Qu\|^1),
\end{equation}
\begin{align}
r\mathcal{E}_r(u) &= \langle \mathcal{E}'(u), u \rangle \nonumber \\
& = \langle \mathcal{E}'(Qu), u \rangle + \langle \mathcal{E}''(Qu)(u-Qu), u-Qu \rangle + o(\|u-Qu\|^2). \label{T3}
\end{align}
In the last line, we have used that the range of $\mathcal{E}''(0)$ lies in the dual space $V_1'$ of $V_1$ (12.11 of \cite{Chill}), and thus so does $\mathcal{E}''(Qu)$ up to second order. From 12.15 of \cite{Chill}, we know that
\begin{equation} \label{derivativelowerbound}
\|\mathcal{E}'\| \geq c\|(I-P')\mathcal{E}'\| \geq c\|u-Qu\|.
\end{equation}
We write $r = \|u\|$. Writing $\hat u = u / \|u\|$, we define the radial component of the derivative by $\mathcal{E}_r(u) = \langle \mathcal{E}'(u), \hat u \rangle$ and the angular component $\mathcal{E}_\theta$ by
\[
\langle \mathcal{E}_\theta(u),v \rangle = \langle \mathcal{E}'(u),v_\theta \rangle,
\]
where $v = v_r + v_\theta$ and $v_\theta \perp v_r$. Then $\mathcal{E}' = \mathcal{E}_\theta + \langle \hat u, \cdot \rangle\mathcal{E}_r$. We then define the sets
\begin{equation*}
W^\varepsilon = \{u \in U:\mathcal{E}(u)\neq 0, \varepsilon\|\mathcal{E}_\theta\| \leq |\mathcal{E}_r| \}
\end{equation*}
for $\varepsilon > 0$. The norm here is the norm on the dual space $V'$.

\section*{Gauge invariance and the Yang-Mills flow}

In this section, we show how Theorems \ref{Thm2} and \ref{Thm3} can be proven from Theorem \ref{Thm1}. We begin by recalling how the Lojasiewicz inequality can be proven in the gauge invariant case. The main result we need is Proposition 3.20 from \cite{Wilkin}, which we reproduce below.

\begin{prop} \label{propWilkin}
Let $H$ be an affine Hilbert manifold with the action of a Hilbert Lie group $G$, and let $\mathcal{E}: H \to \mathbb{R}$ be a $G$-invariant functional with critical point $x$. Denote by $\rho_x:\text{Lie}(G) \to T_xH$ the infinitesimal action of $G$. Suppose that the following operator is elliptic
\begin{equation*}
\mathcal{E}''(x) + \rho_x \rho_x^*: T_xH \to T_xH.
\end{equation*}
Then there exists $\varepsilon > 0$ such that if $\|y-x\| < \varepsilon$ then there exists $\mathfrak{g} \in (\ker \rho_x)^\perp$ such that for $g=e^{-\mathfrak{g}}$,
\begin{equation} \label{p*=0}
\rho_x^*(g \cdot y-x)=0.
\end{equation}
\end{prop}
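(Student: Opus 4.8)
The plan is to prove this by the implicit function theorem, realizing the gauge-fixing condition \eqref{p*=0} as the zero set of a map whose linearization at the base point is controlled by the operator $\rho_x^*\rho_x$. Define
\[
F:(\ker\rho_x)^\perp \times H \to (\ker\rho_x)^\perp, \qquad F(\mathfrak{g},y) = \rho_x^*\bigl(e^{-\mathfrak{g}}\cdot y - x\bigr),
\]
where I use that $\operatorname{rg}(\rho_x^*) = (\ker\rho_x)^\perp$ once $\rho_x$ is known to have closed range. Since $\mathfrak{g}\mapsto e^{-\mathfrak{g}}$, the group action, and $\rho_x^*$ are all analytic, $F$ is analytic near $(0,x)$, and $F(0,x) = \rho_x^*(x-x) = 0$, so $(0,x)$ lies on the zero set. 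It then remains to invert the partial derivative in $\mathfrak{g}$.

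First I would compute the linearization at the base point. Using $\frac{d}{dt}\big|_{t=0} e^{-t\mathfrak{h}}\cdot x = -\rho_x(\mathfrak{h})$ together with the continuity of $\rho_x^*$,
\[
D_\mathfrak{g}F(0,x)[\mathfrak{h}] = \rho_x^*\!\left(\frac{d}{dt}\Big|_{t=0} e^{-t\mathfrak{h}}\cdot x\right) = -\rho_x^*\rho_x(\mathfrak{h}),
\]
so $D_\mathfrak{g}F(0,x) = -\rho_x^*\rho_x$ restricted to $(\ker\rho_x)^\perp$. The crux is to show this restriction is an isomorphism of $(\ker\rho_x)^\perp$ onto itself, and this is exactly where the ellipticity hypothesis is used. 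The operator $\rho_x^*\rho_x$ is self-adjoint and satisfies $\langle \rho_x^*\rho_x\mathfrak{h},\mathfrak{h}\rangle = \|\rho_x\mathfrak{h}\|^2$, so its kernel is precisely $\ker\rho_x$ and it is injective on $(\ker\rho_x)^\perp$. The ellipticity of $\mathcal{E}''(x) + \rho_x\rho_x^*$ forces the Laplacian-type operators $\rho_x\rho_x^*$ and $\rho_x^*\rho_x$ to be Fredholm with closed range (in the Yang-Mills setting these are the covariant Laplacians $D_A D_A^*$ and $D_A^*D_A$). Closed range together with self-adjointness yields $\operatorname{rg}(\rho_x^*\rho_x) = (\ker\rho_x^*\rho_x)^\perp = (\ker\rho_x)^\perp$, so the restriction is also surjective, hence an isomorphism.

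With the linearization invertible, the analytic implicit function theorem supplies $\varepsilon>0$ and an analytic map $y\mapsto \mathfrak{g}(y)\in(\ker\rho_x)^\perp$ on $\{\|y-x\|<\varepsilon\}$ with $\mathfrak{g}(x)=0$ and $F(\mathfrak{g}(y),y)=0$; setting $g = e^{-\mathfrak{g}(y)}$ then gives precisely \eqref{p*=0}. I expect the principal obstacle to be functional-analytic bookkeeping rather than any single deep estimate: one must fix Sobolev completions so that the action, the exponential map, and $\rho_x^*$ are simultaneously smooth (indeed analytic) between the chosen spaces, and verify that $\rho_x^*\rho_x$ acts isomorphically on $(\ker\rho_x)^\perp$ at that regularity. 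The ellipticity assumption is exactly what underwrites this last point, since it guarantees the finite-dimensional kernel and closed range of the relevant Laplacians, and thereby both the closed-range property of $\rho_x^*$ used to define the target of $F$ and the invertibility of $\rho_x^*\rho_x$ used in the implicit function theorem.
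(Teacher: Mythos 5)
The paper offers no proof of this proposition at all --- it is quoted verbatim as Proposition 3.20 of \cite{Wilkin} --- and your implicit-function-theorem argument (gauge-fixing map $F(\mathfrak{g},y)=\rho_x^*(e^{-\mathfrak{g}}\cdot y-x)$, linearization $-\rho_x^*\rho_x$ inverted on $(\ker\rho_x)^\perp$) is exactly the standard proof given in that source, so it is correct and takes the same route. The one point to keep honest is that invertibility of $\rho_x^*\rho_x$ on $(\ker\rho_x)^\perp$ does not follow purely formally from ellipticity of $\mathcal{E}''(x)+\rho_x\rho_x^*$ on an abstract Hilbert manifold; it requires, as you note, that $\rho_x$ be a differential operator between suitable Sobolev completions so that $\rho_x^*\rho_x$ is an elliptic Laplacian with closed range --- which is precisely the situation in the Yang--Mills and Higgs-bundle settings the proposition is meant for.
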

Here, a subtraction of points in $H$ is identified with a tangent vector via the exponential map. Since the Lojasiewicz inequality is $G$-invariant, Proposition \ref{propWilkin} allows us to restrict attention to points $y$ in a neighborhood of $x$ which satisfy $\rho_x^*(y-x) = 0$. We also reproduce Lemma 3.25 from \cite{Wilkin}:
\begin{prop} \label{propWilkin2}
The map $F:(\ker \rho_x)^\perp \times \ker \rho_x^* \to H$ given by
\begin{equation} \label{localdiffeo}
F(\mathfrak{g},X) = e^\mathfrak{g} \cdot (x + X)
\end{equation}
is a local diffeomorphism about the point $F(0,0)=x$.
\end{prop}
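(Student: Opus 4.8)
The plan is to apply the inverse function theorem for maps between Hilbert spaces, so that the entire argument reduces to computing the differential $dF_{(0,0)}$ and verifying that it is a bounded linear isomorphism onto $T_xH$. Smoothness of $F$ is immediate: the map $X \mapsto x+X$ is affine, the exponential $\mathfrak{g} \mapsto e^{\mathfrak{g}}$ is smooth on $\text{Lie}(G)$, and the action $G \times H \to H$ is smooth by hypothesis, so $F$ is a composition of smooth maps and in particular $C^1$.

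To compute the differential I would evaluate the two partial derivatives at $(0,0)$ separately. Setting $\mathfrak{g}=0$ gives $F(0,X) = x+X$, whose derivative in $X$ is the inclusion $Y \mapsto Y$ of $\ker \rho_x^*$ into $T_xH$. Setting $X=0$ gives $F(\mathfrak{g},0) = e^{\mathfrak{g}} \cdot x$, and differentiating along $t \mapsto e^{t\mathfrak{h}}$ at $t=0$ recovers the infinitesimal action $\mathfrak{h} \mapsto \rho_x \mathfrak{h}$. Hence
\[
dF_{(0,0)}(\mathfrak{h},Y) = \rho_x \mathfrak{h} + Y, \qquad \mathfrak{h} \in (\ker \rho_x)^\perp, \ Y \in \ker \rho_x^*,
\]
which is a bounded operator since $\rho_x$ is bounded and $\ker \rho_x^*$ embeds isometrically into $T_xH$.

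Injectivity is the easy direction. If $\rho_x \mathfrak{h} + Y = 0$, then $\rho_x \mathfrak{h} = -Y$ lies in both $\mathrm{rg}\,\rho_x$ and $\ker \rho_x^* = (\mathrm{rg}\,\rho_x)^\perp$; since $\overline{\mathrm{rg}\,\rho_x} \cap (\mathrm{rg}\,\rho_x)^\perp = \{0\}$, we conclude $Y = 0$ and $\rho_x \mathfrak{h} = 0$, whence $\mathfrak{h} \in \ker \rho_x \cap (\ker \rho_x)^\perp = \{0\}$. For surjectivity I would use the orthogonal decomposition $T_xH = \overline{\mathrm{rg}\,\rho_x} \oplus \ker \rho_x^*$: given $Z \in T_xH$, split $Z = Z_1 + Z_2$ with $Z_1 \in \overline{\mathrm{rg}\,\rho_x}$ and $Z_2 \in \ker \rho_x^*$, set $Y = Z_2$, and solve $\rho_x \mathfrak{h} = Z_1$ for some $\mathfrak{h} \in (\ker \rho_x)^\perp$.

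The hard part is precisely this last solvability step, which needs $\mathrm{rg}\,\rho_x$ to be closed rather than merely dense in its closure; only then is $\rho_x$ a bijection from $(\ker \rho_x)^\perp$ onto $\mathrm{rg}\,\rho_x = \overline{\mathrm{rg}\,\rho_x}$, and the splitting sharpens to $T_xH = \mathrm{rg}\,\rho_x \oplus \ker \rho_x^*$. This is exactly where the standing ellipticity hypothesis on $\mathcal{E}''(x) + \rho_x \rho_x^*$ is used: in the present setting, and in the Yang-Mills application where $\rho_x = D_A$, the associated covariant Laplacian $\rho_x^* \rho_x$ is elliptic over a compact manifold, hence Fredholm with closed range, and closed range of $\rho_x^* \rho_x$ forces closed range of $\rho_x$. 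Once $dF_{(0,0)}$ is known to be bounded and bijective, the open mapping theorem supplies a bounded inverse, and the inverse function theorem then gives that $F$ is a local diffeomorphism about $(0,0)$.
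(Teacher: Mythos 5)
Your proposal is correct and is essentially the standard argument: the paper itself offers no proof of this proposition, simply reproducing it as Lemma 3.25 of \cite{Wilkin}, and Wilkin's proof is exactly your inverse-function-theorem computation of $dF_{(0,0)}(\mathfrak{h},Y)=\rho_x\mathfrak{h}+Y$ combined with the decomposition $T_xH=\mathrm{rg}\,\rho_x\oplus\ker\rho_x^*$. You also correctly isolate the one nontrivial analytic input, namely that ellipticity forces $\mathrm{rg}\,\rho_x$ to be closed so that the differential is surjective.
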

In particular, for points $X$ in a neighborhood of $0$, the map $F(0,X)$ defines a local smooth manifold $N$ through $x$. From Propositions \ref{propWilkin} and \ref{propWilkin2}, any point in a neighborhood of $0$ is gauge equivalent to a point of $N$. Since the Lojasiewicz inequality is gauge invariant, we only need to show that it holds on $N$. Since $\mathcal{E}''(x) + \rho_x \rho_x^*$ is elliptic, the restriction
\[
(\mathcal{E}''(x) + \rho_x \rho_x^*)|_{T_xN} = \mathcal{E}''(x)|_{T_xN}
\]
is also elliptic. We also have
\[
(\mathcal{E}|_N)'' = \mathcal{E}''|_{T_xN}.
\]
This claim is not true in general but holds because the directions orthogonal to $T_xN$ are the gauge directions where $\mathcal{E}$ is constant. Thus the restriction $\mathcal{E}|_N$ of $\mathcal{E}$ to $N$ has an elliptic Hessian and so satisfies the Lojasiewicz inequality (see \cite{Chill}). It remains to consider the component of the derivative orthogonal to this manifold. Since on $N$, $\|\mathcal{E}'\| \geq \|(\mathcal{E}|_N)'\|$, the Lojasiewicz inequality on the original space follows.

We now prove Theorems \ref{Thm2} and \ref{Thm3}.

\begin{proof} [Proof of Theorem \ref{Thm2}]
By the preceding discussion, there exists a path $g(t)$ of gauge transformations such that the gauge equivalent trajectory $g(t) u(t) \in N$, which is a trajectory of the gradient flow of $\mathcal{E}|_N$. Since $\mathcal{E}|_N$ has an elliptic Hessian, convergence of the secant follows from Theorem \ref{Thm1}. 
\end{proof}

\begin{proof} [Proof of Theorem \ref{Thm3}]
Convergence of the secant in $H^1$ follows immediately from Theorem \ref{Thm2}. 
\end{proof}

The remaining sections of the paper are dedicated to proving Theorem \ref{Thm1}.

\section*{Characteristic exponents}

By an analytic curve we mean a map $\gamma(t):[0,\varepsilon) \to U$ given by
\[
\gamma(t) = \gamma(0) + \gamma'(0)t + \frac{1}{2}\gamma''(0)t^2 + \ldots
\]
where the $n$th derivative $\gamma^{(n)}(0)$ is a vector in $V$. Then $\mathcal{E}(\gamma(t))$ and $r^2 = \|\gamma(t)\|^2$ are also Taylor series in $t$. By the Newton-Puiseux theorem the latter can be inverted to give $t$ as a power series in $r^{1/k}$ for a positive integer $k$. Substituting into the Taylor series for $\mathcal{E}(\gamma(t))$, we get the Puiseux series
\begin{equation} \label{puiseux}
\mathcal{E}(\gamma(r)) = a_lr^l + \ldots
\end{equation}
for some $l \in \mathbb{Q}^+$. We also have
\begin{equation} \label{puiseux2}
\mathcal{E}_r(\gamma(r)) = la_lr^{l-1} + \ldots
\end{equation}
On $W^\varepsilon$ we have $c\|\mathcal{E}'\| \leq |\mathcal{E}_r| \leq \|\mathcal{E}'\|$, so that we also have
\begin{equation} \label{puiseux3}
\|\mathcal{E}'(\gamma(r))\| = cr^{l-1} + \ldots.
\end{equation}

From the triangle inequality and the definition of $W^\varepsilon$, and using (\ref{derivativelowerbound}), we have
\begin{equation} \label{radiallinear}
|\mathcal{E}_r| \geq c\|\mathcal{E}'\| \geq c\|u-Qu\|.
\end{equation}

Since the curve selection lemma is central to many proofs concerning semianalytic sets on finite dimensional spaces, it's interesting to consider whether an analogous result might hold for sets defined through inequalities involving analytic functions on infinite dimensional spaces. The curve selection lemma often functions as a kind of compactness result that allows us to restrict attention to a one-dimensional curve. Like the Lojasiewicz inequality, it won't hold in general in infinite dimensions and this failure can be linked to the non-compactness of the unit sphere. For example, suppose we have $\mathcal{E}(u) = \|u\|^3 - c(u)\|u\|^2$. For an orthonormal basis $\{e_i\}$, we can arrange that the coefficient $c(e_i) \to 0$ as $i \to \infty$, as we cycle through the infinite number of dimensions available. Thus, the set $\{ \mathcal{E}(u) > 0 \}$ contains a sequence approaching the origin but contains no analytic curve emanating from the origin.

In our context of functions with elliptic Hessians, it's straightforward to show that sets like $\{ \mathcal{E} > 0 \}$ satisfy a curve selection lemma inside $W^\varepsilon$, since the usual curve selection lemma applies on the finite dimensional manifold $S$ and the behaviour of the function is trivial in the remaining directions. We remark that unlike in the finite dimensional case a curve selection lemma will not hold for the set $S$ outside of $W^\varepsilon$, as the Hessian cannot control the behaviour of the higher order terms where the linear growth in the derivative has no radial component. However, a curve selection lemma may hold for other expressions such as those involving the derivative $\mathcal{E}'$.

We now define a map which allows us to relate some of the finite dimensional results derived in \cite{ThomGradient} to the present parabolic case. Using the splitting \eqref{directsum}, we write $u \in U$ as $u = u^0 + u^1$. Let $x^1, \ldots,x^k$ be coordinates on $S$ which are orthonormal at 0, so that $(x^1,\ldots,x^k,u^1)$ are coordinates for $u$. We define a map $\phi: U \to \mathbb{R}^{k+2}$ by associating to each point $u = (x^1,\ldots,x^k,u^1)$ the point
\begin{equation} \label{phidef}
\phi(u) = \bar u = (x^1,\ldots,x^k,\|u-Qu\|,\mathcal{E}(u)-\mathcal{E}(Qu)) \in \mathbb{R}^{k+2},
\end{equation}
where the $\{x^i\}$ are the same as for $u$.

We associate to $\mathcal{E}:U \to \mathbb{R}$ a finite dimensional function $f: \mathbb{R}^{k+2} \to \mathbb{R}$ by
\begin{equation} \label{fdef}
f(x^1,\ldots,x^k,y^1,y^2) = \mathcal{E}|_S(x^1,\ldots,x^k) + y^2.
\end{equation}
Note that
\begin{equation} \label{fofphi}
f \circ \phi(u) = \mathcal{E}(Qu) + (\mathcal{E}(u)-\mathcal{E}(Qu)) = \mathcal{E}(u).
\end{equation}

Write $r =\|u\|$ and $\bar r =\|\bar u\|$. Since the $x^i$ are orthonormal at $0$ and using the Taylor expansion (\ref{T1}), 
\begin{align*}
\bar r^2 
&= \|X(Q u)\|^2 + \|u-Q u\|^2 +(\mathcal{E}(u)-\mathcal{E}(Qu))^2 \\
&= \|Q u\|^2 + O(\|Q u\|^3) + \|u-Q u\|^2 + O(\|u-Q u\|^4),
\end{align*}
where $X$ here represents the coordinates $(x^1,\ldots,x^k)$ on $S$, which are orthonormal only at $0$. On the other hand,
\begin{align*}
\|u\|^2&=\|Q u\|^2+\|u-Q u\|^2 + 2\langle Q u,\,u-Q u\rangle \\
&=\|Q u\|^2+\|u-Q u\|^2 +O(\|u\|^3).
\end{align*}
 Hence
\begin{equation}\label{eq:rbar_vs_r}
\bar r^2 = r^2 + O(\|u\|^3).
\end{equation}

The first derivatives are related as follows. For a tangent vector $v$ we have the pushforward
\[
\bar v = d \phi_v (u) = (d_v X(Qu), d_v \|u-Qu\|, d_v \mathcal{E}(u) - d_v \mathcal{E}(Qu)).
\]
Then from the chain rule and (\ref{fofphi}),
\begin{equation}\label{derivesrelationship}
d_{\bar v} f(\bar u) =  d_{v} f(\phi(u)) = d_v \mathcal{E}(u).
\end{equation}
Next we want to compare the radial and angular derivatives of $\mathcal{E}$ and $f$, which don't exactly correspond through a pushforward except as we approach the origin. From (\ref{eq:rbar_vs_r}) we have
\[
\frac{d \bar r}{dr} = 1 + O(\|u\|),
\]
so that
\begin{equation}\label{radialderivssame}
f_{\bar r} = (1 + O(\|u\|))\mathcal{E}_r.
\end{equation}
Similarly, we show that
\begin{equation}\label{angular1}
\|f_\theta\| = (1 + O(\|u\|)) \| \mathcal{E}_\theta \|,
\end{equation}
which implies that there exist constants $c_1$ and $c_2$ such that near the origin
\begin{equation}\label{angular2}
c_1 \| \mathcal{E}_\theta \| \leq \|f_\theta\| \leq c_2 \| \mathcal{E}_\theta \|.
\end{equation}
To see this, note that if there is a small change $du$ to $u$, $f$ is by its definition only affected by perturbations to the first and third components of $\phi(u)$. The corresponding change in $f$ is
\[
df = d\mathcal{E}|_S(x^1,\ldots,x^k) + d\mathcal{E}(u) - d\mathcal{E}(Qu)
\]
which diverges from $d\mathcal{E}$ as we move away from the origin only due to the first term and the map $X$. We define
\begin{equation*}
\overline W^\varepsilon = \{u:f(x)\neq 0, \varepsilon\|f_\theta\| \leq \|f_r\| \}.
\end{equation*}
Then if $u \in W^\varepsilon$, from (\ref{angular2}),
\[
\varepsilon \|f_\theta\| \leq c \varepsilon \|\mathcal{E}_\theta \| \leq c|\mathcal{E}_r| = c|f_r|.
\]
So $\bar u \in \overline W^{\bar \varepsilon}$ with $\bar \varepsilon = \varepsilon / c$.

The following proposition is analogous to proposition 4.2 of \cite{ThomGradient}.

\begin{prop} \label{prop4.2}
There exists a finite subset of positive rationals $L = \{l_1,\ldots,l_k \} \in \mathbb{Q}^+$ such that for any $\varepsilon > 0$,
\[
\frac{r\mathcal{E}_r(u)}{\mathcal{E}(u)} \to L \;\; \text{as} \;\; W^\varepsilon \ni u \to 0.
\]
In particular, as a germ at the origin, each $W^\varepsilon$ is the disjoint union
\[
W^\varepsilon = \bigcup_{l_i \in L} W^\varepsilon_{l_i},
\]
where we may define $W^\varepsilon_{l_i} = \{u \in W^\varepsilon:|\frac{r\mathcal{E}_r}{\mathcal{E}} - l_i| \leq r^\delta \}$, for $\delta>0$ sufficiently small.

Moreover, there exist constants $0<c_\varepsilon<C_\varepsilon$, which depend on $\varepsilon$, such that
\[
c_\varepsilon< \frac{|\mathcal{E}|}{r^{l_i}} <C_\varepsilon \;\; \text{on} \;\; W^\varepsilon_{l_i}.
\]
\end{prop}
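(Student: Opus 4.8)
The plan is to reduce the statement to the corresponding finite-dimensional result, Proposition 4.2 of \cite{ThomGradient}, by transporting the problem through the map $\phi: W^\varepsilon \to \overline{W}^{\bar\varepsilon}$ defined in \eqref{phidef}. The key observation is that the quantity $\frac{r\mathcal{E}_r(u)}{\mathcal{E}(u)}$ on $W^\varepsilon$ is controlled by the analogous finite-dimensional quantity $\frac{\bar r f_{\bar r}(\bar u)}{f(\bar u)}$ on $\overline{W}^{\bar\varepsilon}$, thanks to the comparisons already established: \eqref{rrbar} gives $\bar r^2 = r^2 + o(3)$, \eqref{f1} gives $f(\bar u) = \mathcal{E}(u) + o(\|u-Qu\|^3)$, and \eqref{f2} gives $f_{\bar r}(\bar u) = \mathcal{E}_r(u) + o(\|u-Qu\|^2)$. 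Since $f$ is a genuine analytic function on $\mathbb{R}^{k+2}$, Proposition 4.2 of \cite{ThomGradient} provides a finite set $L$ of positive rationals such that $\frac{\bar r f_{\bar r}}{f} \to L$ as $\overline{W}^{\bar\varepsilon} \ni \bar u \to 0$, together with the Puiseux-type bounds $c_{\bar\varepsilon} < |f|/\bar r^{l_i} < C_{\bar\varepsilon}$ on each stratum.

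First I would verify that the error terms are genuinely lower order relative to the leading behaviour, so that the finite-dimensional limit transfers. The essential point is that inside $W^\varepsilon$ the radial derivative dominates, and by \eqref{radiallinear} we have $|\mathcal{E}_r| \geq c\|u - Qu\|$, so the $o(\|u-Qu\|^2)$ and $o(\|u-Qu\|^3)$ discrepancies in \eqref{f1} and \eqref{f2} are negligible compared with the leading Puiseux terms \eqref{puiseux} and \eqref{puiseux2}. Combining this with $\bar r^2 = r^2 + o(3)$ shows that $\frac{r\mathcal{E}_r(u)}{\mathcal{E}(u)}$ and $\frac{\bar r f_{\bar r}(\bar u)}{f(\bar u)}$ differ by a quantity tending to zero as $u \to 0$ in $W^\varepsilon$. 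Hence the limit set $L$ for $\mathcal{E}$ is inherited directly from that for $f$, and the decomposition $W^\varepsilon = \bigcup_{l_i \in L} W^\varepsilon_{l_i}$ with $W^\varepsilon_{l_i} = \{u \in W^\varepsilon : |\frac{r\mathcal{E}_r}{\mathcal{E}} - l_i| \leq r^\delta\}$ follows because the possible limiting values $l_i$ are isolated rationals, so for $r$ small the trajectory is confined to a single $r^\delta$-neighbourhood of one $l_i$.

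For the final growth bounds $c_\varepsilon < |\mathcal{E}|/r^{l_i} < C_\varepsilon$ on $W^\varepsilon_{l_i}$, I would again pull these back from the finite-dimensional estimates on $\overline{W}^{\bar\varepsilon}_{l_i}$. Using \eqref{f1} to replace $\mathcal{E}(u)$ by $f(\bar u)$ up to lower order and \eqref{rrbar} to replace $r$ by $\bar r$, the bound $c_{\bar\varepsilon} < |f|/\bar r^{l_i} < C_{\bar\varepsilon}$ yields the corresponding bound for $\mathcal{E}$, with constants adjusted to absorb the error terms (this is where the $\varepsilon$-dependence of the constants enters, since $\bar\varepsilon = \varepsilon/c$ and the comparison constant $c$ from \eqref{ftheta} depends on $\varepsilon$). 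The main obstacle I expect is not any single calculation but rather ensuring uniformity: the error estimates \eqref{f1} and \eqref{f2} hold pointwise, and I must check that they are uniform enough across $W^\varepsilon$ that the limit $\frac{r\mathcal{E}_r}{\mathcal{E}} \to L$ holds as a genuine set limit and not merely along individual analytic curves. The containment $\phi(W^\varepsilon) \subseteq \overline{W}^{\bar\varepsilon}$ proved just above is exactly what guarantees this, since it lets the finite-dimensional Proposition 4.2, whose conclusion is uniform on $\overline{W}^{\bar\varepsilon}$, be applied wholesale rather than curve by curve.
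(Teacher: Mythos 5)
Your proposal follows essentially the same route as the paper's own proof: push a sequence in $W^\varepsilon$ through the map $\phi$, use \eqref{rrbar}, \eqref{f1} and \eqref{f2} to identify the limits of $\frac{r\mathcal{E}_r}{\mathcal{E}}$ and $\frac{\bar r f_{\bar r}}{f}$, invoke Proposition 4.2 of \cite{ThomGradient} for $f$, and pull the stratification and the bounds $c_\varepsilon < |\mathcal{E}|/r^{l_i} < C_\varepsilon$ back through $\phi$. Your additional attention to why the $o(\|u-Qu\|^2)$ and $o(\|u-Qu\|^3)$ errors are negligible (via \eqref{radiallinear}) and to the containment $\phi(W^\varepsilon)\subseteq \overline{W}^{\bar\varepsilon}$ only makes explicit what the paper leaves implicit.
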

\begin{proof}
Consider a sequence $u_n \in W^\varepsilon$ with $u_n \to 0$ and $r_n\mathcal{E}_r(u_n) / \mathcal{E}(u_n) \to l$. We write
\[
u_n = (x_n^1,\ldots,x_n^k,u_n^1)
\]
and $\bar u_n = \phi (u_n)$. Then from (\ref{fofphi}) and (\ref{eq:rbar_vs_r}), $r_n\mathcal{E}_r(u_n) / \mathcal{E}(u_n)$ and $\bar r_n f_r(\bar u_n) / f(\bar u_n)$ have the same limit, so that
\[
\frac{\bar r_n f_r(\bar u_n)}{f(\bar u_n)} \to l.
\]
Since by the corresponding proposition in \cite{ThomGradient} the limit set of $\bar r_n f_r / f$ is finite, the first part of the proposition follows. The remaining statements in the proposition follow by pulling the corresponding finite dimensional results back through the map $\phi$.
\end{proof}

In particular, the set of limits $L$ is the same as for the function $f$ on a finite dimensional space. 

\begin{lem} \label{Lem4.1}
For each $\varepsilon>0$, there exists $c>0$ such that
\begin{equation}
|\mathcal{E}| \geq cr^{(1-\rho)^{-1}}
\end{equation}
on $W^\varepsilon$. In particular each $W^\varepsilon$ is closed in the complement of the origin.
\end{lem}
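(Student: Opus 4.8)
The plan is to reduce the lemma to the decomposition and growth estimates already supplied by Proposition \ref{prop4.2}, and then extract the exponent bound $l_i \leq (1-\rho)^{-1}$ from the Lojasiewicz inequality \eqref{Lojasiewicz}. Since $W^\varepsilon = \bigcup_{l_i \in L} W^\varepsilon_{l_i}$ is a finite union, it suffices to prove the estimate on each piece $W^\varepsilon_{l_i}$ and then take the smallest resulting constant.

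First I would record the two asymptotics available on $W^\varepsilon_{l_i}$. Proposition \ref{prop4.2} gives $c_\varepsilon < |\mathcal{E}|/r^{l_i} < C_\varepsilon$, so $|\mathcal{E}| \asymp r^{l_i}$. The defining condition $|r\mathcal{E}_r/\mathcal{E} - l_i| \leq r^\delta$ forces $r\mathcal{E}_r = (l_i + O(r^\delta))\mathcal{E}$; since every $l_i \in \mathbb{Q}^+$ is strictly positive, for $r$ small the factor $l_i + O(r^\delta)$ is bounded away from zero, and combined with $|\mathcal{E}| \asymp r^{l_i}$ this yields $|\mathcal{E}_r| \asymp r^{l_i - 1}$.

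The heart of the argument is the exponent inequality. On $W^\varepsilon$ we have $c\|\mathcal{E}'\| \leq |\mathcal{E}_r|$, hence $\|\mathcal{E}'\| \leq C r^{l_i-1}$ from the previous step. On the other hand the Lojasiewicz inequality \eqref{Lojasiewicz} together with $|\mathcal{E}| \asymp r^{l_i}$ gives $\|\mathcal{E}'\| \geq c|\mathcal{E}|^\rho \geq c' r^{l_i\rho}$. Comparing, $c' r^{l_i\rho} \leq C r^{l_i-1}$ for all small $r$, which is consistent as $r \to 0$ only if $l_i\rho \geq l_i - 1$, i.e. $l_i(1-\rho) \leq 1$, so $l_i \leq (1-\rho)^{-1}$. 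The delicate point I expect to be the main obstacle is ensuring this comparison of powers of $r$ is genuinely valid along the whole punctured neighborhood, namely that the hidden constants are uniform along sequences in $W^\varepsilon_{l_i}$ approaching the origin; this is exactly what the two-sided bounds from Proposition \ref{prop4.2} and the $O(r^\delta)$ control in the definition of $W^\varepsilon_{l_i}$ are designed to guarantee.

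With $l_i \leq (1-\rho)^{-1}$ in hand the estimate is immediate: for $r < 1$ we have $r^{l_i} \geq r^{(1-\rho)^{-1}}$, so $|\mathcal{E}| \geq c_\varepsilon r^{l_i} \geq c_\varepsilon r^{(1-\rho)^{-1}}$ on $W^\varepsilon_{l_i}$, and taking the minimum constant over the finite set $L$ gives the claim on all of $W^\varepsilon$. Finally, closedness follows from the estimate itself: if $u_n \in W^\varepsilon$ with $u_n \to u \neq 0$, then the closed inequality $\varepsilon\|\mathcal{E}_\theta\| \leq |\mathcal{E}_r|$ passes to the limit, while $|\mathcal{E}(u)| = \lim |\mathcal{E}(u_n)| \geq c\|u\|^{(1-\rho)^{-1}} > 0$ shows $\mathcal{E}(u) \neq 0$, so $u \in W^\varepsilon$.
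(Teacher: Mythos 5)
Your proof is correct, but it takes a different route from the paper. The paper's proof is essentially a two-line reduction: it combines \eqref{radiallinear} with the Lojasiewicz inequality to get $|\mathcal{E}_r| \geq c|\mathcal{E}|^\rho$ on $W^\varepsilon$, and then either integrates this differential inequality along a curve (which would need a curve selection argument) or, as the paper actually does, transports the whole statement through the map $\phi$ to the corresponding finite-dimensional Lemma 4.1 of Kurdyka--Mostowski--Parusi\'nski. You instead stay entirely inside the infinite-dimensional picture: you use only the two-sided bounds $c_\varepsilon < |\mathcal{E}|/r^{l_i} < C_\varepsilon$ and the defining condition of $W^\varepsilon_{l_i}$ from Proposition \ref{prop4.2}, plus the Lojasiewicz inequality, to squeeze $r^{l_i\rho} \lesssim \|\mathcal{E}'\| \lesssim r^{l_i-1}$ and extract $l_i \leq (1-\rho)^{-1}$ directly. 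Note that this reverses the paper's logical order: the paper derives the exponent bound \eqref{lbound} as a \emph{consequence} of Lemma \ref{Lem4.1}, whereas you derive it first and deduce the lemma from it. There is no circularity, since Proposition \ref{prop4.2} is established independently of Lemma \ref{Lem4.1}; the one point you should make explicit is that each $W^\varepsilon_{l_i}$ with $l_i \in L$ really does accumulate at the origin (otherwise the power comparison as $r \to 0$ gives no constraint), which follows because each $l_i$ is by definition a limit value of $r\mathcal{E}_r/\mathcal{E}$ along a sequence tending to $0$ and the finiteness of $L$ forces that sequence eventually into $W^\varepsilon_{l_i}$. What your approach buys is a self-contained argument that avoids a second appeal to the finite-dimensional machinery; what the paper's approach buys is brevity and uniformity of method, since everything is funneled through the single map $\phi$.
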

\begin{proof}
From (\ref{radiallinear}) and the Lojasiewicz inequality (\ref{Lojasiewicz}),
\begin{equation*}
| \mathcal{E}_r | \geq c |\mathcal{E}|^\rho
\end{equation*}
on $W^\varepsilon$. If we could use a curve selection lemma to restrict attention to an analytic curve, recalling that $\mathcal{E} \neq 0$ on $W^\varepsilon$, the result would follow integrating. However, it's easier here to simply use the map $\phi$ and the corresponding result on a finite dimensional space (Lemma 4.1 of \cite{ThomGradient}).
\end{proof}

Using Lemma \ref{Lem4.1} and Proposition \ref{prop4.2}, we have
\begin{equation} \label{lbound}
l \leq (1-\rho)^{-1}
\end{equation}
for $l \in L$, where $\rho$ is the Lojasiewicz exponent.

\section*{Variants of the Lojasiewicz inequality}

The proof of the gradient conjecture on $\mathbb{R}^n$ utilizes the Bochnak-Lojasiewicz inequality. We show that this inequality continues to hold in our present context.

\begin{lem}[Bochnak-Lojasiewicz inequality] \label{Bochnak}
Let $\mathcal{E}$ be as above with $\mathcal{E}(0) = 0$, but $\mathcal{E}'(0)$ need not be $0$. Then there is a constant $c_{BL}$ such that in a neighborhood of the origin,
\begin{equation}
r\|\mathcal{E}'\|\geq c_{BL} |\mathcal{E}|.
\end{equation}
\end{lem}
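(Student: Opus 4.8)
The plan is to reduce the statement to the classical finite-dimensional Bochnak-Lojasiewicz inequality through the map $\phi$, in the same spirit as the proof of Lemma \ref{Lem4.1}, after first disposing of the non-critical case. If $\mathcal{E}'(0)\neq 0$, then by continuity $\|\mathcal{E}'(u)\|\geq c_0>0$ on a neighbourhood of the origin, while $\mathcal{E}(0)=0$ gives $\mathcal{E}(u)=\int_0^1\langle\mathcal{E}'(su),u\rangle\,ds$ and hence $|\mathcal{E}(u)|\leq M r$ with $M=\sup\|\mathcal{E}'\|$. Thus $r\|\mathcal{E}'\|\geq c_0 r\geq (c_0/M)|\mathcal{E}|$ and the inequality is immediate. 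I may therefore assume $\mathcal{E}'(0)=0$, so that the construction of $S$, $Q$ and $\phi$ from the previous section is available.

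First I would observe that $\phi$ is in fact defined on a full neighbourhood of $0$, not merely on $W^\varepsilon$: the choice of $a$ always permits solving \eqref{phidirection} for $y^1,y^2\geq 0$, and the relations \eqref{rrbar}, \eqref{f1} and \eqref{derivativesrelated} were derived from the Taylor expansions \eqref{T1}--\eqref{T3} alone (the vanishing of the first-order term $\langle\mathcal{E}'(Qu),u-Qu\rangle$ uses only $Qu\in S$), and so persist off $W^\varepsilon$. The function $f$ of \eqref{fdef} is analytic with $f(0)=\mathcal{E}(0)=0$, so the finite-dimensional Bochnak-Lojasiewicz inequality applies to $f$ near $\bar 0$, giving $\bar r\,\|f'(\bar u)\|\geq c|f(\bar u)|$. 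Since $\bar r=r+o(r)$ by \eqref{rrbar} and $f(\bar u)=\mathcal{E}(u)+o(\|u-Qu\|^3)$ by \eqref{f1}, everything hinges on comparing $\|\mathcal{E}'(u)\|$ with $\|f'(\bar u)\|$.

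The crux, and the step I expect to be the main obstacle, is the bound $\|f'(\bar u)\|\leq C\|\mathcal{E}'(u)\|$, since this is precisely the place where the finite-dimensional proof would have used the curve selection lemma, which is unavailable here. By \eqref{derivativesrelated} the pairing of $f'(\bar u)$ against any $\bar v$ coming from a $v$ with $v_2=0$ equals $\langle\mathcal{E}'(u),v\rangle$ up to lower order, and the correspondence $v\mapsto\bar v$ is an isometry on these matched components; the only part of $f'(\bar u)$ unaccounted for is its component in the $[y^2,-y^1]$ direction orthogonal to $[y^1,y^2]$ in the $(y^1,y^2)$-plane. Exactly as in the computation preceding \eqref{ftheta}, this component is bounded by $2a\|u-Qu\|$, which by the lower bound \eqref{derivativelowerbound} is at most $(2a/c)\|\mathcal{E}'(u)\|$. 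The extra term $\langle\mathcal{E}''(Qu)(u-Qu),v_2\rangle$ that appears when $v_2\neq 0$ is $o(\|u-Qu\|)$, because the range of $\mathcal{E}''(0)$ annihilates $V_0$ while $\mathcal{E}''(Qu)-\mathcal{E}''(0)=O(\|Qu\|)$, and is likewise absorbed by \eqref{derivativelowerbound}. Combining these gives $\|f'(\bar u)\|\leq C\|\mathcal{E}'(u)\|$ on a neighbourhood of the origin.

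Assembling the pieces yields
\[
r\|\mathcal{E}'(u)\|\;\geq\;\frac{r}{C\bar r}\,\bigl(\bar r\,\|f'(\bar u)\|\bigr)\;\geq\;\frac{c}{C}\,\frac{r}{\bar r}\,|f(\bar u)|,
\]
and with $r/\bar r\to 1$ this dominates a fixed multiple of $|\mathcal{E}(u)|$ once the correction in \eqref{f1} is controlled. The only delicate regime is where $|\mathcal{E}(u)|$ is no larger than the error $o(\|u-Qu\|^3)$; but that error is of strictly higher order than $r\|u-Qu\|\leq c^{-1}r\|\mathcal{E}'(u)\|$, since $o(\|u-Qu\|^3)/(r\|u-Qu\|)=o(\|u-Qu\|^2)/r\to 0$ using $\|u-Qu\|\leq Cr$, so it is absorbed into a fraction of the left-hand side. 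Rearranging then produces the desired $r\|\mathcal{E}'\|\geq c_{BL}|\mathcal{E}|$.
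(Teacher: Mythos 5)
Your argument is correct, but it takes a genuinely different route from the paper's. You reduce everything to the finite-dimensional Bochnak-Lojasiewicz inequality for the auxiliary function $f$ of \eqref{fdef} via the map $\phi$, which obliges you to check that $\phi$ and the comparisons \eqref{rrbar}, \eqref{f1}, \eqref{derivativesrelated} persist off $W^\varepsilon$, to establish the derivative bound $\|f'(\bar u)\|\leq C\|\mathcal{E}'(u)\|$ (which does hold: the $(y^1,y^2)$-part of $f'$ has norm $2a\|u-Qu\|\leq C\|\mathcal{E}'(u)\|$ by \eqref{derivativelowerbound}, and the $S$-part is controlled by \eqref{Qu}), and finally to absorb the $o(\|u-Qu\|^3)$ discrepancy between $f(\bar u)$ and $\mathcal{E}(u)$. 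The paper never touches $\phi$ or $f$ here: it splits $|\mathcal{E}(u)|\leq|\mathcal{E}(u)-\mathcal{E}(Qu)|+|\mathcal{E}(Qu)|$, bounds the first term by $c\|u-Qu\|^2\leq cr\|\mathcal{E}'(u)\|$ using the Taylor expansion and \eqref{derivativelowerbound}, and applies the finite-dimensional Bochnak-Lojasiewicz inequality only to $\mathcal{E}|_S$ at the point $Qu$, converting $\|\mathcal{E}'(Qu)\|$ back to $\|\mathcal{E}'(u)\|$ via \eqref{Qu}. The two proofs consume identical ingredients --- the finite-dimensional inequality plus \eqref{derivativelowerbound} and \eqref{Qu} --- but the paper's direct decomposition sidesteps both the extension of $\phi$ beyond $W^\varepsilon$ and your final error-absorption step, while your version buys uniformity with the proofs of Lemma \ref{Lem4.1} and Proposition \ref{prop5.3}. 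One small caveat: your claim that the extra term $\langle\mathcal{E}''(Qu)(u-Qu),v_2\rangle$ is $o(\|u-Qu\|)$ is only safe under the literal reading $v_2\in V_0$; read as the component in $V_1$ orthogonal to $u-Qu$ it is genuinely of order $\|u-Qu\|$. Fortunately this term is irrelevant to the one-sided comparison $\|f'(\bar u)\|\leq C\|\mathcal{E}'(u)\|$ that your proof actually requires, so the argument stands.
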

\begin{proof}
If $\mathcal{E}'(0)$ is non-zero the result is trivial, so we assume $\mathcal{E}'(0) = 0$. This result can be extended to the infinite dimensional case in the same way as the Lojasiewicz inequality. From 12.12 and 12.15 of \cite{Chill} we know that
\begin{align*}
|\mathcal{E}(u) - \mathcal{E}(Qu)| &\leq c\|u - Qu\|^2 \\
& \leq cr\|\mathcal{E}'(u)\|.
\end{align*}
Since the Bochnak-Lojasiewicz inequality holds on a finite dimensional space, we also have
\begin{equation*}
|\mathcal{E}(Qu) - \mathcal{E}(0)| \leq cr\|\mathcal{E}'(Qu)\|.
\end{equation*}
Proceeding as on pg 138 of \cite{Chill}, 
\begin{align*}
|\mathcal{E}(u) - \mathcal{E}(0)| &\leq |\mathcal{E}(u) - \mathcal{E}(Qu)| + |\mathcal{E}(Qu) - \mathcal{E}(0)| \\
&\leq cr\|\mathcal{E}'(u)\| + cr\|\mathcal{E}'(Qu)\| \\
&\leq cr\|\mathcal{E}'(u)\|,
\end{align*}
since from 12.16 of that paper we have
\begin{equation} \label{Qu}
\|\mathcal{E}'(Qu)\| \leq c\|\mathcal{E}'(u)\|.
\end{equation}
\end{proof}

Here we make some incidental remarks about another variant of the Lojasiewicz inequality from the finite dimensional context. Define the critical set $S_0 = \{u | E'(u) = 0 \} $. We show that
\begin{equation} \label{otherLojderiv}
|\mathcal{E}'(u)| \geq cd(u,S_0)^\alpha,
\end{equation}
for some $c,\alpha > 0$. Since the analogous inequality holds on the finite dimensional space $S$, we have $d(Qu,S_0) \leq c\|\mathcal{E}'(Qu)\|^{1/\alpha}$ for some $c,\alpha > 0$.  Since $S_0 \subset S$,
\begin{align*} 
d(u,S_0) &\leq d(u,Qu) + d(Qu,S_0)\\
&\leq c\|\mathcal{E}'(u)\| + c\|\mathcal{E}'(Qu)\|^{1/\alpha} \\
&\leq c\|\mathcal{E}'(u)\|^{1/\alpha},
\end{align*}
for $\alpha > 1$, where we have used (\ref{derivativelowerbound}) and (\ref{Qu}). Unlike on a finite dimensional space, the analogous result will not hold for $\mathcal{E}$ in general, as the Hessian cannot control the behaviour of the higher order terms where the linear growth in the derivative has no radial component. However, using a similar proof and recalling (\ref{radiallinear}), we do have
\begin{equation} \label{otherLojderiv2}
|\mathcal{E}(u)|^{1/\alpha} \geq cd(u,S_0) \ \;\;\; \text{for} \; u \in W^\varepsilon,
\end{equation}
for some $c,\alpha > 0$.

\section*{Asymptotic critical values}

In analogy with proposition 5.4 of \cite{ThomGradient}, we say that $a \neq 0$ is an asymptotic critical value of $E = \frac{\mathcal{E}}{r^l}$ if there exists a sequence $u_n \to 0$ such that $E(u_n) \to a$ and 
\begin{equation} \label{asympcritvalue}
\frac{\|\mathcal{E}_\theta (u_n)\|}{|\mathcal{E}_r(u_n)|} \to 0.
\end{equation}
The sequence $u_n$ is eventually in $W^\varepsilon$. Then recalling \eqref{fdef}, \eqref{phidef}, \eqref{eq:rbar_vs_r}, \eqref{radialderivssame} and \eqref{angular2}, we have
\[
\frac{\|f_\theta (\bar u_n)\|}{|f_{\bar r}(\bar u_n)|} \leq c\frac{\|\mathcal{E}_\theta (u_n)\|}{|\mathcal{E}_r(u_n)|} \to 0.
\]
Furthermore, we have
\[
\frac{f(\bar u_n)}{\bar r_n^l} \to a.
\]
Thus if $a$ is an asymptotic critical value of $E$ corresponding to the sequence $u_n$, it is also an asymptotic critical value of the function $\frac{f}{\bar r^l}$ corresponding to the sequence $\bar u_n = \phi(u_n)$. From Proposition 5.1 of \cite{ThomGradient} the set of asymptotic critical values of $\frac{f}{\bar r^l}$ is finite. Thus so is the set of asymptotic critical values of $E = \frac{\mathcal{E}}{r^l}$. The following proposition is analogous to proposition 5.3 of \cite{ThomGradient}.

For some $l > 0$ to be chosen later, we define the function
\[
E = \frac{\mathcal{E}}{r^l}.
\]
We calculate the derivative
\begin{equation} \label{Ederiv}
E_\theta = \frac{\mathcal{E}_\theta}{r^l}\;\;\;\text{and}\;\;\; E_r = \frac{\mathcal{E}_r}{r^l} - l\frac{\mathcal{E}}{r^{l+1}} = \frac{\mathcal{E}_r}{r^l} - \frac{l}{r}E.
\end{equation}

\begin{prop} \label{prop5.3}
Let $E$ be as above and let $a \in \mathbb{R}$. Then for any $\eta > 0$ there exist an exponent $\rho_a < 1$ and constants $c,c_a > 0$ such that
\begin{equation} \label{Lojanalogy}
r\|E'\| \geq c|E-a|^{\rho_a}
\end{equation}
on the set
\[
Z = Z_{\eta} = \{u \in U; |E_r| \leq r^{\eta}\|E'\|,\; |E - a| \leq c_a \}.
\]
Moreover, there exist constants $\delta,\delta' > 0$ such that
\[
Z'=Z_\delta' = \{u \in U; r^\delta \leq |E - a| \leq c_a \} \subset Z_{\delta'}.
\]
In particular, (\ref{Lojanalogy}) holds on $Z'$.
\end{prop}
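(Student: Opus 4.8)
The plan is to prove the proposition by transporting its finite-dimensional counterpart, Proposition 5.3 of \cite{ThomGradient}, through the map $\phi$, exactly as was done for Proposition \ref{prop4.2}, Lemma \ref{Lem4.1} and the finiteness of asymptotic critical values. Write $\bar E = f/\bar r^{\,l}$ for the finite-dimensional function attached to $E = \mathcal{E}/r^l$, with radial and angular parts $\bar E_{\bar r} = f_{\bar r}/\bar r^l - (l/\bar r)\bar E$ and $\bar E_{\bar\theta} = f_{\bar\theta}/\bar r^l$ as in (\ref{Ederiv}), and let $\bar Z_{\bar\eta}$ and $\bar Z'_{\bar\delta}$ denote the finite-dimensional analogues of $Z_\eta$ and $Z'_\delta$. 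Proposition 5.3 of \cite{ThomGradient} supplies, for each $\bar\eta>0$, an exponent $\bar\rho_a<1$ and constants with $\bar r\|\bar E'\| \ge \bar c\,|\bar E - a|^{\bar\rho_a}$ on $\bar Z_{\bar\eta}$, together with the inclusion $\bar Z'_{\bar\delta} \subset \bar Z_{\bar\delta'}$. All the comparisons I need follow from relations already established: (\ref{f1}) and (\ref{rrbar}) give $\bar E(\bar u) = E(u) + (\text{higher order})$, so the values $|E-a|$ and $|\bar E-a|$ are comparable and we may take $\rho_a = \bar\rho_a$; (\ref{Ederiv}), (\ref{f2}) and (\ref{rrbar}) give $\bar E_{\bar r}(\bar u) = E_r(u) + (\text{higher order})$; and (\ref{Ederiv}) with (\ref{ftheta}) gives $\|\bar E_{\bar\theta}\| \le c\|E_\theta\|$, whence $\|\bar E'\| \le c\|E'\|$.

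I would first dispatch the second statement and the inequality on $Z'$, since here every comparison runs in the direction (\ref{ftheta}) provides. The set $Z'_\delta$ is cut out purely by bounds on the value $E-a$, so comparability of values immediately gives $\phi(Z'_\delta) \subseteq \bar Z'_{\bar\delta}$ for a suitable $\bar\delta$. Pulling the finite-dimensional inclusion $\bar Z'_{\bar\delta} \subset \bar Z_{\bar\delta'}$ back yields $Z'_\delta \subset Z_{\delta'}$: the condition $|\bar E_{\bar r}| \le \bar r^{\bar\delta'}\|\bar E'\|$ transfers because $|E_r|$ is comparable to $|\bar E_{\bar r}|$ and $\|\bar E'\| \le c\|E'\|$, so that $|E_r| \le c\,r^{\bar\delta'}\|E'\| \le r^{\delta'}\|E'\|$ for small $r$ once $\delta' < \bar\delta'$. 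The inequality (\ref{Lojanalogy}) then pulls back on $Z'$ through
\[
r\|E'\| \ \ge\ c\,\bar r\|\bar E'\| \ \ge\ c\bar c\,|\bar E - a|^{\bar\rho_a} \ \ge\ c'\,|E-a|^{\rho_a},
\]
using (\ref{rrbar}), the angular bound and comparability of values; here the finite-dimensional inequality is applied at $\bar u \in \bar Z'_{\bar\delta} \subset \bar Z_{\bar\delta'}$.

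The main obstacle is the first statement, the inequality on the full $Z_\eta$, where the proof by pullback requires instead the \emph{reverse} inclusion $\phi(Z_\eta) \subseteq \bar Z_{\bar\eta}$. To land $\bar u$ in $\bar Z_{\bar\eta}$ one must deduce $|\bar E_{\bar r}| \le \bar r^{\bar\eta}\|\bar E'\|$ from $|E_r| \le r^{\eta}\|E'\|$, and since $|\bar E_{\bar r}|$ is comparable to $|E_r|$ this now demands a lower bound $\|\bar E'\| \ge c\|E'\|$, opposite to (\ref{ftheta}). The discrepancy is exactly the angular derivative of $\mathcal{E}$ in the infinitely many $V_1$-directions orthogonal to $u-Qu$, which $\phi$ discards: unless $\mathcal{E}''(0)|_{V_1}$ is scalar these contribute to $\|E_\theta\|$, hence to $\|E'\|$, but never to $\|\bar E'\|$, so $\|\bar E'\|$ can be strictly smaller than $\|E'\|$ and $Z_\eta$ is genuinely larger than $\phi^{-1}(\bar Z_{\bar\eta})$. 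I expect to close this gap by treating separately those points of $Z_\eta$ at which the discarded directions dominate the gradient: there $\|E'\|$ is of order $\|u-Qu\|/r^l$ and large, so the desired inequality should follow directly, while on the remaining points the retained $V_0$ and $(y^1,y^2)$ directions control $\|E'\|$ and the pullback goes through as before. Alternatively one may restrict to $Z_\eta \cap W^\varepsilon$, where (\ref{ctsinvert}) is available and Proposition \ref{prop4.2} pins $r\mathcal{E}_r/\mathcal{E}$ near an exponent $l \in L$, forcing $\bar E_{\bar r}$ to be correspondingly small. A secondary point throughout is to verify that the higher-order errors in (\ref{f1})--(\ref{f2}) remain negligible against the possibly large exponent $l$ allowed by (\ref{lbound}), which I would control using the bound $\|u-Qu\| \le c\|\mathcal{E}'\|$ from (\ref{derivativelowerbound}).
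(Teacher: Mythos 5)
Your overall strategy --- transporting Proposition 5.3 of \cite{ThomGradient} through the map $\phi$ --- is exactly the paper's, and your treatment of the second statement and of (\ref{Lojanalogy}) on $Z'$ is sound, since there every comparison runs in the direction that (\ref{ftheta}) supplies. But the main statement is left with a genuine gap: you correctly isolate the needed reverse bound $\|F'\| \geq c\|E'\|$ (in your notation $\|\bar E'\| \geq c\|E'\|$) as the obstacle, conclude that it ``can fail,'' and offer only two speculative repairs, neither of which closes it. The first (treating points where the discarded $V_1$-directions dominate, so that $\|E'\|$ is of order $\|u-Qu\|/r^l$ and ``large'') does not by itself yield $r\|E'\| \geq c|E-a|^{\rho_a}$, because nothing bounds $\|u-Qu\|$ from below in terms of $|E-a|$; the second (restricting to $Z_\eta \cap W^\varepsilon$) proves a weaker statement than the proposition asserts.

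The reverse bound in fact holds on all of $Z$, and establishing it is the crux of the paper's proof. On $Z$ the radial part of the gradient is negligible, so $\|E'\| \leq c\|E_\theta\|$, and the $V_1$-component of the gradient is comparable to $\|u-Qu\|$ \emph{from both sides}: the lower bound is (\ref{derivativelowerbound}), and the upper bound $\|(I-P')\mathcal{E}'(u)\| \leq c\|u-Qu\|$ is immediate from (\ref{T2}), since $(I-P')\mathcal{E}'(Qu)=0$ by the definition of $S$ and $\mathcal{E}''$ is bounded. On the finite-dimensional side the $(y^1,y^2)$-part of $f'$ has norm exactly $2a\sqrt{(y^1)^2+(y^2)^2} = 2a\|u-Qu\|$, while the $S$-directions are carried over verbatim. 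Hence $c\|E_\theta\| \leq \|F_\theta\| \leq c'\|E_\theta\|$ and therefore $c\|E'\| \leq \|F'\| \leq c'\|E'\|$ on $Z$: the directions of the $V_1$-gradient orthogonal to $u-Qu$ that $\phi$ discards cost only a multiplicative constant, not the inclusion $\phi(Z_\eta) \subseteq \bar Z_{\bar\eta}$ (the constant is absorbed by adjusting $\eta$, as the paper notes). This two-sided comparison is the step your proposal is missing.
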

\begin{proof}
Choose any $u \in Z$. Making $U$ smaller if necessary so that $r$ is small, in $Z$ we have $\|E_\theta\| \leq \|E'\| \leq c\|E_\theta\|$. Using (\ref{derivativelowerbound}), we also have 
\[
c\|u-Qu\| \leq r^l\|(I-P')E_\theta\| \leq c'\|u-Qu\|,
\]
where the inequality on the right is trivially true. We recall the map (\ref{fdef}) and write as usual $\bar u = \phi(u)$. We define $F = \frac{f}{r^l}$ and
\[
\bar Z = \{\bar u \in \mathbb{R}^{k+2}; |F_{\bar r}| \leq {\bar r}^{\eta}\|F'\|,\; |F - a| \leq c_a \}.
\]
Then analogously we have $\|F_\theta\| \le \|F'\| \le C\|F_\theta\|$ on $\bar Z$. 
Moreover, using $f\circ\phi=\mathcal E$ together with the 
radial and angular comparison estimates \eqref{radialderivssame} and \eqref{angular2}, 
and using \eqref{eq:rbar_vs_r} to compare $r$ and $\bar r$, there exist constants 
$c_1,c_2>0$ such that, for $u$ sufficiently close to $0$,
\[
c_1\,\|\mathcal E_\theta(u)\|\ \le\ \|F_\theta(\bar u)\|\ \le\ c_2\,\|\mathcal E_\theta(u)\|,
\qquad
|F_{\bar r}(\bar u)|\ \le\ \bar r^{\eta}\,\|F'(\bar u)\|.
\]
Consequently,
\[
c\,\|E'(u)\|\ \le\ \|F'(\bar u)\|\ \le\ c'\,\|E'(u)\|.
\]
Since $F(\bar u)=E(u)$ by \eqref{fofphi} and $\bar r = r(1+O(r))$ by \eqref{eq:rbar_vs_r}, 
we also have $|F(\bar u)-a|\le c_a$; hence $\bar u\in \bar Z$.

It follows from the corresponding result for analytic functions on $\mathbb{R}^n$ (Proposition 5.3 of \cite{ThomGradient}) that
\[
\bar r\|F'(\bar u)\| \geq c|F(\bar u)-a|^{\rho_a}
\]
for some constants $c,\rho_a$. Thus, we also have $r\|E'(u)\| \geq c|E(u)-a|^{\rho_a}$ as required. Finally, if $u \in Z'_\delta$ then $\bar u \in \bar Z'_\delta$ where
\[
\bar Z'_\delta = \{\bar u \in \mathbb{R}^{k+2}; \bar r^\delta \leq |F - a| \leq c_a \}
\]
(adjusting constants if necessary). Again from Proposition 5.3 of \cite{ThomGradient} we have $\bar u = \phi(u) \in \bar Z'_\delta \subset \bar Z_{\delta'}$. It follows that $u \in Z_{\delta'}$, completing the proof.

\end{proof}

The following section proceeds almost identically to the finite dimensional case (sections 6 and 7 of \cite{ThomGradient}), so we summarise only. Consult the original paper for more details. One difference is that, consistent with the usual convention for geometric flows, we consider the negative gradient flow. This means that $\mathcal{E}$ is positive and decreasing, rather than negative and increasing as in the original paper.

\section*{Estimates on a Trajectory}

Let $u(s)$ be a trajectory of $-\frac{\mathcal{E}'}{\|\mathcal{E}'\|}$ for $0 \leq s \leq s_0$, $u(s) \to 0$ as $s \to s_0$. Then $\mathcal{E}(u(s))$ is positive for $s < s_0$. Let $L = \{l_1,\ldots,l_k\}$ denote the set of characteristic exponents of $\mathcal{E}$ defined in Proposition \ref{prop4.2}. Fix $l>0$, not necessarily in $L$, and recall $E = \frac{\mathcal{E}}{r^l}$. Then as in (6.1) of \cite{ThomGradient},
\begin{equation} \label{dFds}
\frac{dE(u(s))}{ds}=-\frac{1}{\|\mathcal{E}'\|r^l} \left(\|\mathcal{E}_\theta\|^2 + |\mathcal{E}_r|^2\left( 1 - \frac{l\mathcal{E}}{r\mathcal{E}_r} \right) \right).
\end{equation}
The following proposition is analogous to proposition 6.1 of \cite{ThomGradient}.
\begin{prop} \label{prop6.1}
For each $l>0$ there exist $\varepsilon,\omega > 0$, such that for any trajectory $u(s)$, $E(u(s))=\frac{\mathcal{E}}{r^l}(u(s))$ is strictly decreasing in the complement of
\[
\bigcup_{l_i \in L,l_i<l}W^\varepsilon_{l_i}, \;\; \text{if} \;\; l \notin L,
\]
or in the complement of
\[
W_{-\omega,l_i} \cup \bigcup_{l_i \in L,l_i<l}W^\varepsilon_{l_i}, \;\; \text{if} \;\; l \in L,
\]
where in the last case $W_{-\omega,l_i} = \{u \in W^\varepsilon_{l_i}: r^{-\omega}\|\mathcal{E}_\theta\| \leq |\mathcal{E}_r|\}$.
\end{prop}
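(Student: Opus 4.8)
The plan is to read off the sign of $\frac{dE}{ds}$ directly from the evolution formula (\ref{dFds}). Since the prefactor $\frac{1}{\|\mathcal{E}'\|r^l}$ is strictly positive, $E$ is strictly decreasing exactly where the bracketed quantity is positive, and rewriting it via $\|\mathcal{E}'\|^2 = \|\mathcal{E}_\theta\|^2 + |\mathcal{E}_r|^2$ gives
\[
\|\mathcal{E}_\theta\|^2 + |\mathcal{E}_r|^2\Bigl(1 - \frac{l\mathcal{E}}{r\mathcal{E}_r}\Bigr) = \|\mathcal{E}'\|^2 - \frac{l\mathcal{E}\mathcal{E}_r}{r}.
\]
So I would reduce the entire proposition to the single inequality $\|\mathcal{E}'\|^2 > \frac{l\mathcal{E}\mathcal{E}_r}{r}$ and verify it on three pieces of the relevant complement: (i) points outside $W^\varepsilon$; (ii) points of $W^\varepsilon_{l_i}$ with $l_i > l$; and (iii) in the case $l \in L$, the points of $W^\varepsilon_{l_i}$ with $l_i = l$ lying outside $W_{-\omega,l_i}$. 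The sets $W^\varepsilon_{l_i}$ with $l_i < l$ are precisely the excluded ones, and when $l \notin L$ there is no borderline piece, so these three cases exhaust the stated complement in both alternatives of the proposition.

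On (i), outside $W^\varepsilon$ one has $|\mathcal{E}_r| < \varepsilon\|\mathcal{E}_\theta\| \leq \varepsilon\|\mathcal{E}'\|$, while the Bochnak--Lojasiewicz inequality (Lemma \ref{Bochnak}) gives $\frac{|\mathcal{E}|}{r} \leq c_{BL}^{-1}\|\mathcal{E}'\|$. Hence the cross term obeys $\bigl|\frac{l\mathcal{E}\mathcal{E}_r}{r}\bigr| \leq \frac{l\varepsilon}{c_{BL}}\|\mathcal{E}'\|^2$, so choosing $\varepsilon < c_{BL}/l$ yields $\|\mathcal{E}'\|^2 - \frac{l\mathcal{E}\mathcal{E}_r}{r} \geq \bigl(1 - l\varepsilon/c_{BL}\bigr)\|\mathcal{E}'\|^2 > 0$. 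This pins down the choice of $\varepsilon$.

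On (ii), Proposition \ref{prop4.2} gives $\frac{r\mathcal{E}_r}{\mathcal{E}} = l_i + g$ with $|g| \leq r^\delta$ on $W^\varepsilon_{l_i}$, so $\mathcal{E}_r = \frac{\mathcal{E}}{r}(l_i + g)$ and a one-line computation produces
\[
|\mathcal{E}_r|^2 - \frac{l\mathcal{E}\mathcal{E}_r}{r} = \frac{\mathcal{E}^2}{r^2}(l_i + g)(l_i + g - l).
\]
As $r \to 0$ the leading factor is $l_i(l_i - l) > 0$, so for $r$ small this is positive and the remaining $\|\mathcal{E}_\theta\|^2 \geq 0$ only reinforces the conclusion.

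The delicate case is (iii), and I expect it to be the main obstacle, since this is exactly where the parameter $\omega$ must be calibrated. When $l = l_i$ the factor $(l_i - l)$ vanishes and the previous identity collapses to $|\mathcal{E}_r|^2 - \frac{l\mathcal{E}\mathcal{E}_r}{r} = \frac{\mathcal{E}^2}{r^2}(l_i + g)g$, which is only of size $O(r^\delta)\frac{\mathcal{E}^2}{r^2}$ and may carry either sign, so the radial part can no longer control the bracket and the angular term $\|\mathcal{E}_\theta\|^2$ must be retained. Using $\frac{\mathcal{E}^2}{r^2} = \mathcal{E}_r^2/(l_i+g)^2 \leq c|\mathcal{E}_r|^2$ for small $r$, the correction is bounded by $c\,r^\delta|\mathcal{E}_r|^2$, whereas outside $W_{-\omega,l_i}$ one has $\|\mathcal{E}_\theta\|^2 > r^{2\omega}|\mathcal{E}_r|^2$. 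The bracket is then at least $\bigl(r^{2\omega} - c\,r^\delta\bigr)|\mathcal{E}_r|^2$, which is strictly positive for small $r$ as soon as $2\omega < \delta$; choosing $\omega \in (0,\delta/2)$ closes this case. Alternatively, one could transfer the whole estimate through the map $\phi$ of (\ref{phidef}) to the finite-dimensional $f$ and invoke Proposition 6.1 of \cite{ThomGradient}, but the direct argument is self-contained once Proposition \ref{prop4.2} and Lemma \ref{Bochnak} are in hand.
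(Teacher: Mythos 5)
Your argument is correct and is exactly the computation the paper intends: its own proof of Proposition \ref{prop6.1} is just a list of ingredients (Lemma \ref{Bochnak}, (\ref{dFds}), (\ref{lbound})) deferring to Section 6 of \cite{ThomGradient}, and your three-case sign analysis of the bracket in (\ref{dFds}) --- Bochnak--Lojasiewicz off $W^\varepsilon$, the relation $r\mathcal{E}_r/\mathcal{E} = l_i + O(r^\delta)$ from Proposition \ref{prop4.2} on $W^\varepsilon_{l_i}$, and the calibration $2\omega < \delta$ in the borderline case $l_i = l$ --- is precisely that finite-dimensional proof transplanted. The only cosmetic differences are that you never actually need (\ref{lbound}), and that the complement of $W^\varepsilon$ also contains the trivial subcase $\mathcal{E}(u)=0$, where the bracket reduces to $\|\mathcal{E}'\|^2 > 0$.
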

\begin{proof}
Using Lemma \ref{Bochnak}, (\ref{dFds}) and (\ref{lbound}).
\end{proof}

\begin{prop} \label{prop6.2}
There exists a unique $l = l_i \in L$ and constants $\varepsilon > 0$ and $0 < c_1 < C_1 < \infty$, such that $u(s)$ passes through $W^\varepsilon_l$ in any neighborhood of the origin and
\begin{equation} \label{Ul}
u(s) \in U_l = \{u; c_1<\frac{\mathcal{E}(u)}{r^l}<C_1\}
\end{equation}
for $s$ close to $s_0$.
\end{prop}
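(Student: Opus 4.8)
The plan is to identify the exponent $l$ by a monotonicity threshold argument that mirrors the finite-dimensional treatment in \cite{ThomGradient}, driven by two facts. First, by Proposition \ref{prop4.2}, on each cone $W^\varepsilon_{l_i}$ the ratio $\mathcal{E}/r^{l_i}$ is trapped between the positive constants $c_\varepsilon$ and $C_\varepsilon$. Second, writing $E_l=\mathcal{E}/r^l$ for a fixed $l$, Proposition \ref{prop6.1} tells us that $E_l$ is strictly decreasing along the flow away from the finitely many cones $W^\varepsilon_{l_j}$ with $l_j<l$ (and from $W_{-\omega,l}$ when $l\in L$). Combining these, on a cone $W^\varepsilon_{l_j}$ we may write $E_l=(\mathcal{E}/r^{l_j})\,r^{l_j-l}$, which tends to $+\infty$ as $u\to0$ if $l_j<l$ and to $0$ if $l_j>l$. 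These observations are the whole engine of the proof.

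First I would show that at most one cone is entered in every neighbourhood of the origin. Suppose, for contradiction, that two cones $W^\varepsilon_{l_a}$ and $W^\varepsilon_{l_b}$ with $l_a<l_b$ are both entered arbitrarily close to $0$, and fix some $l\in(l_a,l_b)$ with $l\notin L$. Since $W^\varepsilon_{l_b}$ is not among the cones $W^\varepsilon_{l_j}$ with $l_j<l$, Proposition \ref{prop6.1} makes $E_l$ strictly decreasing on and after each visit to $W^\varepsilon_{l_b}$, where $E_l$ is already close to $0$. Hence, once the trajectory has made a sufficiently close visit to $W^\varepsilon_{l_b}$, the value $E_l$ remains small until the trajectory next enters one of the lower cones $W^\varepsilon_{l_j}$ with $l_j\le l_a$; but on such a cone $E_l$ is forced to be large. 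This contradiction shows that, close to the origin, no lower cone can be entered after a visit to $W^\varepsilon_{l_b}$, contradicting the assumption that $W^\varepsilon_{l_a}$ is entered infinitely often. Thus a unique cone $W^\varepsilon_{l}$, $l=l_*\in L$, is entered in every neighbourhood of $0$, which is the asserted exponent.

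Next I would establish the trapping (\ref{Ul}). For this $l=l_*$ all other cones are entered only finitely often, and since $W_{-\omega,l}\subset W^\varepsilon_{l}$, Proposition \ref{prop6.1} gives that $E_{l}$ is strictly decreasing whenever $u(s)\notin W^\varepsilon_{l}$ for $s$ close to $s_0$. On each excursion through the complement of $W^\varepsilon_{l}$ between two consecutive entries, $E_{l}$ therefore decreases from a value below $C_\varepsilon$ to a value above $c_\varepsilon$, both endpoints lying in $W^\varepsilon_{l}$ where $c_\varepsilon<\mathcal{E}/r^{l}<C_\varepsilon$; so $c_\varepsilon<E_{l}<C_\varepsilon$ throughout. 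As entries occur arbitrarily close to $s_0$ there is no final excursion, so (\ref{Ul}) holds with $c_1=c_\varepsilon$ and $C_1=C_\varepsilon$. Uniqueness of $l$ is then immediate: if $\mathcal{E}/r^{l}$ and $\mathcal{E}/r^{l'}$ were both trapped between positive constants with $l<l'$, then $r^{l'-l}$ would be bounded below by a positive constant, which is impossible as $r\to0$.

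The one step that genuinely requires work, exactly as in \cite{ThomGradient}, is to show that some cone is entered in every neighbourhood of the origin, equivalently that the trajectory does not become asymptotically angular with $\|\mathcal{E}_\theta\|/|\mathcal{E}_r|\to\infty$. The plan is to assume the trajectory eventually avoids $W^\varepsilon$; then $|dr/ds|=|\mathcal{E}_r|/\|\mathcal{E}'\|\le\varepsilon$, so $r(s)\le\varepsilon(s_0-s)$ and $\int ds/r$ diverges, while the Bochnak--Lojasiewicz inequality (Lemma \ref{Bochnak}) yields $\tfrac{d}{ds}\log E_l\le (l\varepsilon-c_{BL})/r<0$ for $\varepsilon$ small, forcing $E_l\to0$ for all sufficiently small $l$. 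I would then contradict this ``flat'' decay by transporting the problem to the finite-dimensional function $f$ via the map $\phi$ and invoking the finiteness of the asymptotic critical values established above. Making this reduction rigorous on the portion of the trajectory lying outside $W^\varepsilon$, where $\phi$ is not directly defined, is the main obstacle.
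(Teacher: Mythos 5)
Your overall architecture --- uniqueness of the cone via the monotonicity of $E_l$ for an intermediate exponent $l$, and the trapping of $E_l$ between $c_\varepsilon$ and $C_\varepsilon$ across excursions between consecutive visits to $W^\varepsilon_l$ --- is exactly the intended argument (the paper's proof only lists its ingredients, deferring the details to \cite{ThomGradient}), and those two parts of your sketch are sound. The genuine gap is the step you yourself flag: showing that the trajectory meets $W^\varepsilon$ in every neighbourhood of the origin. The route you propose there --- transporting the flat decay to the finite-dimensional model $f$ via $\phi$ and invoking the finiteness of asymptotic critical values --- would indeed break down, for precisely the reason you give: every estimate relating $\mathcal{E}$ to $f$ ((\ref{f1}), (\ref{f2}), (\ref{ctsinvert}), (\ref{ftheta})) is established only on $W^\varepsilon$, which is exactly the set the trajectory is assumed to avoid, so $\phi$ gives you no control on that portion of the flow.

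No finite-dimensional reduction is needed for this step. The intended closing move is Lojasiewicz's argument (\ref{TrajFinite}): for $s$ near $s_0$ the remaining arclength satisfies $|s-s_0|\le c|\mathcal{E}(u(s))|^{1-\rho}$, while trivially $r(s)=\|u(s)\|\le |s-s_0|$ because the trajectory terminates at the origin. Hence $|\mathcal{E}(u(s))|\ge c\,r(s)^{(1-\rho)^{-1}}$ along the \emph{entire} tail of the trajectory, with no restriction to $W^\varepsilon$. Taking $l=(1-\rho)^{-1}$ this gives $E_l\ge c>0$ there, contradicting the decay $E_l\to 0$ that your differential inequality $\frac{d}{ds}\log E_l\le (l\varepsilon-c_{BL})/r$ produces (together with the divergence of $\int ds/r$) once $\varepsilon<c_{BL}(1-\rho)$. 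This is why the paper cites (\ref{TrajFinite}) and (\ref{lbound}) alongside Propositions \ref{prop6.1} and \ref{prop4.2}. With that substitution for your final step, the proof closes.
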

\begin{proof}
Using the proof of Proposition \ref{prop6.1}, (\ref{lbound}), Lojasiewicz's argument (\ref{TrajFinite}) and Proposition \ref{prop4.2}.
\end{proof}

As in \cite{ThomGradient}, we have
\begin{equation} \label{wdef}
\left|1 - \frac{l\mathcal{E}}{r\mathcal{E}_r} \right| \leq \frac{1}{2}r^{2\omega}.
\end{equation}
and
\begin{equation} \label{Ebound}
\frac{dE(u(s))}{ds} \leq - \frac{c_{BL}|\mathcal{E}|}{2r^{l+1}} \leq - cr^{-1}.
\end{equation}

\begin{prop} \label{prop6.4}
For $\alpha < 2\omega$, the function $g = E + r^\alpha$ is strictly decreasing on the trajectory $u(s)$. In particular $E(u(s))$ has a nonzero limit
\[
E(u(s)) \to a_0>0, \;\; \text{as} \;\; s \to s_0.
\]
Furthermore, $a_0$ must be an asymptotic critical value of $E$ at the origin.
\end{prop}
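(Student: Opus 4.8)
The plan is to establish monotonicity of $g=E+r^\alpha$ by a single derivative computation, to read off the convergence $E\to a_0$ as a formal consequence, and then to identify $a_0$ as an asymptotic critical value by producing the witnessing sequence \emph{directly} from the one-dimensional trajectory, so that the failure of compactness of the sphere never enters. First I would differentiate along the flow. Using $\dot r=-\mathcal E_r/\|\mathcal E'\|$ together with (\ref{dFds}) and the identity $\mathcal E_rE_r=\frac{\mathcal E_r^2}{r^l}-\frac{l\mathcal E\mathcal E_r}{r^{l+1}}$ from (\ref{Ederiv}),
\[
\dot g=\dot E+\alpha r^{\alpha-1}\dot r=-\frac{1}{\|\mathcal E'\|}\Bigl(\frac{\|\mathcal E_\theta\|^2}{r^l}+\mathcal E_r\bigl(E_r+\alpha r^{\alpha-1}\bigr)\Bigr),
\]
so it suffices to show the bracket $B=\frac{\|\mathcal E_\theta\|^2}{r^l}+\mathcal E_r(E_r+\alpha r^{\alpha-1})$ is positive near $s_0$. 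Here $\mathcal E_r>0$ on $W^\varepsilon_l$ since $r\mathcal E_r/\mathcal E\to l>0$ by Proposition \ref{prop4.2}, and near $s_0$ the trajectory lies in $U_l$ with $U_l\cap W^\varepsilon=W^\varepsilon_l$.

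I would then split into three regimes. On $W_{-\omega,l}$, estimate (\ref{wdef}) gives $|E_r|=\frac{\mathcal E_r}{r^l}\bigl|1-\frac{l\mathcal E}{r\mathcal E_r}\bigr|\le\frac12\mathcal E_r r^{2\omega-l}$, and since $\mathcal E_r\le Cr^{l-1}$ this is dominated by $\alpha r^{\alpha-1}$ exactly when $\alpha<2\omega$; hence $E_r+\alpha r^{\alpha-1}>0$ and $B>0$. On $W^\varepsilon_l\setminus W_{-\omega,l}$, Proposition \ref{prop6.1} gives $\dot E<0$, i.e. $\frac{\|\mathcal E_\theta\|^2}{r^l}+\mathcal E_rE_r>0$, and the extra term $\alpha r^{\alpha-1}\mathcal E_r>0$, so $B>0$. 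Outside $W^\varepsilon$ the strong decrease (\ref{Ebound}) gives $\frac{\|\mathcal E_\theta\|^2}{r^l}+\mathcal E_rE_r=\|\mathcal E'\|(-\dot E)\ge c\|\mathcal E'\|/r$, which dominates $|\alpha r^{\alpha-1}\mathcal E_r|\le\alpha r^{\alpha-1}\|\mathcal E'\|$ for small $r$. Thus $B>0$ in every case and $g$ is strictly decreasing.

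Convergence is then formal: $g$ is decreasing and bounded below by $c_1$, hence converges, and since $r^\alpha\to0$ we get $E=g-r^\alpha\to a_0:=\lim g\ge c_1>0$. For the final assertion I would examine $\liminf_{s\to s_0}\|\mathcal E_\theta\|/|\mathcal E_r|$ along the trajectory. If it is $0$, a sequence $s_n\to s_0$ furnishes $u(s_n)\to0$, $E(u(s_n))\to a_0$ and $\|\mathcal E_\theta\|/|\mathcal E_r|\to0$, which is precisely the definition of an asymptotic critical value. If instead $\|\mathcal E_\theta\|/|\mathcal E_r|\ge\delta>0$ near $s_0$, then on $W^\varepsilon_l$ the formula for $\dot E$, together with $c r^{l-1}\le\mathcal E_r$, $\|\mathcal E'\|\le C|\mathcal E_r|$ and the smallness of $1-\frac{l\mathcal E}{r\mathcal E_r}$, forces $-\dot E\ge c/r$, while outside $W^\varepsilon$ the same bound is (\ref{Ebound}); as the arclength parametrisation gives $r\le s_0-s$, this yields $\int^{s_0}(-\dot E)\,ds\ge\int^{s_0}\frac{c}{s_0-s}\,ds=\infty$, contradicting $\int^{s_0}(-\dot E)\,ds=E(s_1)-a_0<\infty$.

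The main obstacle is the monotonicity on $W_{-\omega,l}$: there the leading radial contributions to $\dot E$ cancel to order $r^{2\omega}$, so $\dot E$ carries no definite sign, and the whole argument hinges on the correction $r^\alpha$ absorbing this near-cancellation, which is exactly what pins the threshold $\alpha<2\omega$. The second, more conceptual point is that the usual finite-dimensional identification of $a_0$ with a critical value extracts a uniform angular lower bound near the level $\{E=a_0\}$ by compactness, which is unavailable here; the remedy is to read the witnessing sequence off the trajectory itself and to drive the contradiction with the elementary geometric bound $r\le s_0-s$.
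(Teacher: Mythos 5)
Your argument is correct and is essentially the argument the paper imports wholesale from Kurdyka--Mostowski--Parusi\'nski: you differentiate $g$ along the flow and verify positivity of the bracket on the three exhaustive regimes $W_{-\omega,l}$ (via (\ref{wdef}) and $\alpha<2\omega$), $W^\varepsilon_l\setminus W_{-\omega,l}$ (via Proposition \ref{prop6.1}), and the complement of $W^\varepsilon$ (via (\ref{Ebound})), then extract the witnessing sequence for the asymptotic critical value from the trajectory itself exactly as in the cited references. The only cosmetic deviations are that you derive $\mathcal{E}_r\asymp r^{l-1}$ from Proposition \ref{prop4.2} rather than from the Puiseux expansions (\ref{puiseux2})--(\ref{puiseux3}), and phrase the final step as a $\liminf$ dichotomy rather than as the trajectory meeting $W_{-\omega,l}$ in every neighbourhood of the origin; both are sound.
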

\begin{proof}
The proof is analogous to the finite dimensional case \cite{ThomGradient}, using Propositions \ref{prop6.1} and \ref{prop6.2}, (\ref{puiseux2}), (\ref{puiseux3}), (\ref{dFds}), (\ref{wdef}), (\ref{Ebound}), and (\ref{asympcritvalue}).
\end{proof}

\begin{coro} \label{coro6.5}
Let $\sigma(s)$ denote the length of the trajectory between $u(s)$ and the origin. Then
\[
\frac{\sigma(s)}{\|u(s)\|} \to 1 \;\; \text{as} \;\; s \to s_0.
\]
\end{coro}
\begin{proof}
The proof is analogous to the finite dimensional case \cite{ThomGradient}, using (\ref{puiseux}), (\ref{puiseux2}) and the Lojasiewicz argument leading to (\ref{TrajFinite}).
\end{proof}

\section*{Proof of Theorem 1}
\begin{proof}
The proof is analogous to the finite dimensional case \cite{ThomGradient}, using Propositions \ref{prop5.3}, \ref{prop6.2} and \ref{prop6.4}, (\ref{Ebound}), (\ref{wdef}), and (\ref{Ederiv}). 
\end{proof}

\bibliographystyle{plain}

\end{document}